\documentclass{amsart}
\usepackage{amsfonts}
\usepackage{amsmath,amssymb}
\usepackage{amsthm}
\usepackage{amscd}
\usepackage{graphics}
\usepackage{graphicx}

\theoremstyle{remark}{
\newtheorem{Def}{{\rm Definition}}

\newtheorem{Rem}{{\rm Remark}}
\newtheorem{Prob}{{\rm Problem}}

}
\theoremstyle{plain}
{

\newtheorem{Prop}{Proposition}
\newtheorem{Thm}{Theorem}

}

\begin{document}
\title[Representation of Reeb spaces via simplified graphs]{Representations of Reeb spaces via simplified graphs and examples}
\author{Naoki kitazawa}
\keywords{Smooth, real analytic, or real algebraic (real polynomial) functions and maps. Reeb spaces. Cell complexes. 1-dimensional cell complexes whose edges are oriented. Peano continua. (Di)graphs. Reeb (di)graphs. \\
\indent {\it \textup{2020} Mathematics Subject Classification}: Primary~26C05, 26E05, 54C30, 54F15, 57R45, 58C05.}

\address{Osaka Central Advanced Mathematical Institute (OCAMI) \\
3-3-138 Sugimoto, Sumiyoshi-ku Osaka 558-8585
TEL: +81-6-6605-3103
}
\email{naokikitazawa.formath@gmail.com}
\urladdr{https://naokikitazawa.github.io/NaokiKitazawa.html}
\maketitle
\begin{abstract}
{\it Reeb spaces} of continuous real-valued functions on topological spaces are fundamental and strong tools in investigating the spaces. The {\it Reeb space} is a natural quotient space of the space of the domain represented by connected components of its level sets. They have appeared in theory of Morse functions in the last century and as important topological objects. They are shown to be graphs for tame functions on (compact) manifolds such as Morse(-Bott) functions and naturally generalized ones. Related general theory develops actively, recently, mainly by Gelbukh and Saeki. For nice Haudorff spaces and continuous functions there, they are "$0$- or $1$-dimensional".

We study Reeb spaces which are not CW complexes and study their representations by graphs and nice examples. Reconstructing nice smooth functions with given Reeb graphs is of related studies, pioneered by Sharko and followed by Masumoto, Michalak, Saeki, and so on. The author has also contributed to it.

\end{abstract}
\section{Introduction.}
\label{sec:1}

The {\it Reeb spaces} $R_c$ of continuous real-valued functions $c:X \rightarrow \mathbb{R}$ on topological spaces $X$ are fundamental and strong tools in investigating the spaces. They have appeared in theory of Morse functions in the last century. \cite{reeb} is one of related pioneering papers. More rigorously, we can consider the equivalence relation ${\sim}_c$ on $X$ and $R_c:=X/{\sim c}$. For $x_1,x_2 \in X$, $x_1 {\sim}_c x_2$ if and only if they are in a same connected component of a {\it level set} $c^{-1}(y)$. We define the notion of {\it level set} of the function in this way. A {\it contour} of $c$ is a connected component of a level set of of $c$. We have the quotient map $q_c:X \rightarrow R_c$ and the unique continuous function $\bar{c}:R_c \rightarrow \mathbb{R}$ with $c=\bar{c} \circ q_c$.

They have been shown to be graphs for tame functions on (compact) manifolds such as Morse(-Bott) functions and naturally generalized ones, according to \cite{izar, martinezalfaromezasarmientooliveira}, and general studies by \cite{saeki1, saeki2}. 

We give exposition on singularity of smooth maps. A {\it singular} point of a differentiable map is a point of the manifold of the domain where the rank of the differential is smaller than both the dimension of the manifold of the domain and that of the target. We use "{\it critical}" instead of "singular" in the case where the manifold of the target is $1$-dimensional. 
Let $S(c)$ denote the set of all singular points of $c$. 

For the graphs above, a point $v$ is a vertex if ${q_c}^{-1}(v)$ contains some critical point of $c$. We say that such a contour of $c$ is {\it critical}. A contour of $c$ which is not critical is {\it regular}. $R_c$ is the {\it Reeb graph} of $c$. We can orient each edge $e$ incident to $v_1$ and $v_2$ in such a way that $e$ departs from $v_1$ and enters $v_2$ if and only if $\bar{c}(v_1)<\bar{c}(v_2)$ holds. This is a digraph (the {\it Reeb digraph} of $c$).

By general theory such as \cite{gelbukh1, gelbukh2, gelbukh3, gelbukh4, saeki1, saeki2}, it may not be a graph or a CW complex. For smooth real-valued functions on smooth closed and connected manifolds and more generally, continuous real-valued functions on compact, connected, locally connected and spaces ({\it Peano continua}) which are metrizable, the Reeb spaces are also ($1$-dimensional and metrizable) Peano continua. In \cite{gelbukh3, gelbukh4}, for certain general real-valued continuous functions on topological spaces, it is studied whether the Reeb spaces are Hausdorff. 

Related to this, the author has been interested in reconstructing nice smooth functions with given Reeb graphs. Such a problem has been first pioneered by Sharko in \cite{sharko} and followed by Masumoto, Michalak, Saeki, and so on. For this, see \cite{masumotosaeki, michalak} for example. 
These studies are essentially on reconstruction of nice smooth functions on closed surfaces which are Morse or certain elementary polynomials around their critical points. The author has also contributed to it with their higher dimensional cases and respecting shapes of level sets in addition. See papers such as \cite{kitazawa1, kitazawa4} and for non-compact (proper) cases see the paper \cite{kitazawa2} for example. \cite{kitazawa5} is also a related preprint of the author.

As a kind of interdisciplinary studies, the author has been interested in explicit construction of not only smooth, but also real algebraic, or real analytic functions. In the closed manifold cases, the author has systematically constructed and shown realizations of given Reeb (di)graphs of certain classes by constructing real algebraic functions. The author has generalized the so-called natural height function of the $m$-dimensional unit sphere $S^m:=\{x:=(x_1 \cdots x_{m+1}) \in {\mathbb{R}}^{m+1} \mid {\Sigma}_{j=1}^{m+1} {x_j}^2=1 \}$ in the ($m+1$)-dimensional Euclidean space ${\mathbb{R}}^{m+1}$ with $m \geq 1$, defined as the restriction of the projection to the first component by the projection ${\pi}_{m+1,1}(x):=x_1$. We define the canonical projection ${\pi}_{k,k_1}:{\mathbb{R}^{k}} \rightarrow {\mathbb{R}}^{k_1}$ by ${\pi}_{k,k_1}(x):=x_1$ ($x:=(x_1.x_2) \in {\mathbb{R}}^{k_1} \times {\mathbb{R}}^{k_2}={\mathbb{R}}^{k}$) generally. See the pioneering paper \cite{kitazawa3} and see also \cite{kitazawa6, kitazawa7}.

We explain some fundamental terminologies and notation in addition. We use $c {\mid}_Z$ for the restriction of a map $c:X \rightarrow Y$ to a subset $Z\subset X$. The canonical projection of the unit sphere $S^m$ is ${\pi}_{m+1,n} {\mid}_{S^m}$ and generalizing the natural height of $S^m$. Its image is the $n$-dimensional unit disk $D^n \subset {\mathbb{R}}^n$. A map $c:X \rightarrow Y$ between topological spaces is {\it proper} if $c^{-1}(K)$ is compact for each compact subset $K \subset Y$ and a {\it non-proper} map between topological spaces is a map which is not proper.

We go back to our main issue. We discuss the following, which are also questioned in \cite{kitazawa8, kitazawa9, kitazawa10, kitazawa11, kitazawa12} for example.
\begin{Prob}
\label{prob:1}
How should realization of Reeb spaces which are not homeomorphic to finite graph by nice smooth functions be formulated?
\end{Prob}
\begin{Prob}
\label{prob:2}
Can we explain examples for Problem \ref{prob:1}?
\end{Prob}
In the previous preprints \cite{kitazawa9, kitazawa10, kitazawa11, kitazawa12}, respecting the pioneering result \cite[Theorem 1]{kitazawa8}, we consider the following for example. Hereafter, for a topological space $X$ and its subspace $Y \subset X$, ${\overline{Y}}^X \subset X$ is the closure of $Y$ in $X$. 

Let $c_i:\mathbb{R} \rightarrow \mathbb{R}$ ($i=1,2$) be two real analytic functions such that $c_1(x)<c_2(x)$ for $x \in \mathbb{R}$.
We consider the region $D_{c_1,c_2}:=\{(x_1,x_2) \mid c_1(x_2)<x_1<c_2(x_2)\}$ and define the set $X_{m,c_1,c_2}:=\{(x_1,x_2,{(y_j)}_{j=1}^{m-1}) \mid (x_1-c_1(x_2))(c_2(x_2)-x_1)-{\Sigma}_{j=1}^{m-1} {y_j}^2=0 \} \subset {\mathbb{R}}^{m+1}$ for each integer $m>1$. We also have $X_{m,c_1,c_2}=\{(x_1,x_2,{(y_j)}_{j=1}^{m-1}) \in {\overline{D_{c_1,c_2}}}^{{\mathbb{R}}^2} \times {\mathbb{R}}^{m-1} \mid (x_1-c_1(x_2))(c_2(x_2)-x_1)-{\Sigma}_{j=1}^{m-1} {y_j}^2=0 \} \subset {\mathbb{R}}^{m+1}$. This is, by implicit function theorem, an $m$-dimensional smooth submanifold of ${\mathbb{R}}^{m+1}$ with no boundary. More precisely, the value of the partial derivative of $(x_1-c_1(x_2))(c_2(x_2)-x_1)-{\Sigma}_{j=1}^{m-1} {y_j}^2$ by some $y_j$ is not $0$ in the case $(x_1,x_2) \in D_{c_1,c_2}$ and that of $(x_1-c_1(x_2))(c_2(x_2)-x_1)-{\Sigma}_{j=1}^{m-1} {y_j}^2$ by $x_1$ is not $0$ in the case $(x_1,x_2) \in \overline{D_{c_1,c_2}}-D_{c_1,c_2}$. For this, see \cite{kitazawa4}, where the real algebraic or polynomial case is considered and a kind of pioneering studies on real algebraic construction of functions and maps. We do not assume knowledge or arguments of \cite{kitazawa4} and this is no problem.

We call $(c_1,c_2,D_{c_1,c_2},X_{m,c_1,c_2})$ the {\it $c_1<c_2$-zero-set}. We use this name first in the present paper.

The author has investigated topologies of Reeb spaces $R_{{\pi}_{m+1,1} {\mid}_{X_{m,c_1,c_2}}}$.

For example, Proposition \ref{prop:1} is of fundamental propositions. It is also proven in the existing preprints of the auhtor. This can be also immediately shown.
\begin{Prop}
\label{prop:1}
 A point of $X_{m,c_1,c_2}$ is a critical point of ${\pi}_{m+1,1} {\mid}_{X_{m,c_1,c_2}}$ if and only if it is a point mapped to a point of the form $(c_j(p),p)$ {\rm (}$p \in S(c_j)${\rm )} by ${\pi}_{m+1,2} {\mid}_{X_{m,c_1,c_2}}$. Furthermore, to each of such points, exactly one point of $X_{m,c_1,c_2}$ is mapped by ${\pi}_{m+1,2}$. 
\end{Prop}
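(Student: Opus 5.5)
Write $F(x_1,x_2,y):=(x_1-c_1(x_2))(c_2(x_2)-x_1)-{\Sigma}_{j=1}^{m-1}{y_j}^2$, so that $X_{m,c_1,c_2}=F^{-1}(0)$. The plan is to use the Lagrange multiplier criterion. The gradient of $F$ never vanishes on $X_{m,c_1,c_2}$, as recalled above from the implicit function theorem. Hence the tangent space at a point is the orthogonal complement of $\nabla F$, and a point is a critical point of ${\pi}_{m+1,1} {\mid}_{X_{m,c_1,c_2}}$ exactly when $dx_1$ vanishes on that tangent space. Equivalently, $\nabla F$ must be parallel to $(1,0,\ldots,0)$, that is, $\partial F/\partial x_2=0$ and $\partial F/\partial y_j=0$ for all $j$.

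The conditions $\partial F/\partial y_j=-2y_j=0$ force $y=0$. On $X_{m,c_1,c_2}$ this gives $(x_1-c_1(x_2))(c_2(x_2)-x_1)=0$, so $x_1=c_1(x_2)$ or $x_1=c_2(x_2)$. These are exclusive because $c_1<c_2$ everywhere, which puts the point over the boundary $\overline{D_{c_1,c_2}}-D_{c_1,c_2}$. Next compute
\[
\partial F/\partial x_2=-c_1'(x_2)(c_2(x_2)-x_1)+(x_1-c_1(x_2))c_2'(x_2).
\]
If $x_1=c_1(x_2)$, this equals $-c_1'(x_2)(c_2(x_2)-c_1(x_2))$. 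Since the second factor is positive, it vanishes iff $c_1'(x_2)=0$, i.e.\ $x_2\in S(c_1)$. The case $x_1=c_2(x_2)$ is symmetric and gives $x_2\in S(c_2)$.

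So the critical points are exactly the points $(c_j(p),p,0)$ with $p\in S(c_j)$, and these project by ${\pi}_{m+1,2}$ to $(c_j(p),p)$. Conversely, each such point lies in $X_{m,c_1,c_2}$ and satisfies all the conditions, which gives the equivalence. For the final assertion, take any point of $X_{m,c_1,c_2}$ over $(c_j(p),p)$. Its $y$-coordinates satisfy $\Sigma {y_j}^2=0$, so $y=0$, and the fiber is the single point $(c_j(p),p,0)$.

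The only step needing care is the justification of the Lagrange criterion, namely nonvanishing of $\nabla F$ on all of $X_{m,c_1,c_2}$. Over $D_{c_1,c_2}$ some $y_j\neq 0$, so $\partial F/\partial y_j\neq 0$. Over the boundary, $\partial F/\partial x_1=c_2(x_2)+c_1(x_2)-2x_1=\pm(c_2(x_2)-c_1(x_2))\neq 0$. The rest is routine computation.
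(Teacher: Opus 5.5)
Your proof is correct. The paper gives no argument beyond saying the proposition ``can be also immediately shown'', and the facts it recalls nearby are the nonvanishing of $\partial F/\partial y_j$ over $D_{c_1,c_2}$ and of $\partial F/\partial x_1$ over its boundary; your gradient criterion on the regular level set $F^{-1}(0)$ builds on exactly these, with $y=0$ forced and $\partial F/\partial x_2$ becoming a nonzero multiple of $c_j'(x_2)$ on the boundary, so it is the routine direct verification the paper leaves implicit.
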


The author has also posed constraints that $c_1(S(c_1)) \bigcup c_2(S(c_2))$ is a discrete and closed set outside some discrete and closed set $Z_{c_1,c_2} \subset \mathbb{R}$.
Under this, they are also shown to be ($1$-dimensional and metrizable) Peano continua which are also regarded as $1$-dimenional cell complexes, in \cite{kitazawa11}. Through these studies, we also give examples by explicit real analytic functions $c_1$ and $c_2$. See \cite{kitazawa10, kitazawa11} mainly, where we do not assume related knowledge or nontrivial arguments. This is a kind of answers to Problems \ref{prob:1} and \ref{prob:2}. 

We are also reviewing existing studies on real analytic reconstruction of ({\it non-proper}) functions with topological information and combinatorial one. More precisely, this is essentially studied by the author, first in the preprint \cite{kitazawa8} (\cite[Theorem 1]{kitazawa8}). Of course, we do not assume related knowledge or arguments.

In the present paper, we discuss Problems \ref{prob:1} and \ref{prob:2} and present answers in certain new ways. In the next section, first, we explain cell complexes and $1$-dimensional cases with natural orientations induced by continuous real-valued functions. As a kind of new proposal, we propose a method of representation of the oriented $1$-dimensional cell complexes by a $1$-dimensional CW complex whose $1$-cells ({\it edges}) are oriented.

In the third section, we discuss examples for our proposal and this is our main result (Theorems \ref{thm:1} and \ref{thm:2}). For example, we respect \cite{kitazawa9}. There cases presented above are first studied with the constraint that both functions $c_1$ and $c_2$ converge to a same number (Definition \ref{def:5}). Last, we also discuss canonical compactifications of the functions in Theorems \ref{thm:1} and \ref{thm:2} shortly (Theorems \ref{thm:3} and \ref{thm:4}). Further precise studies on these compactications are left to us as our future studies. 

Again, some knowledge and arguments of the presented preprints may help us to understand main ingredients of our arguments, where we do not assume them.

\section{Cell complexes and $0$- or $1$-dimensional cases with natural orientations induced by continuous real-valued functions.}
\subsection{Cell complexes and $0$- or $1$-dimensional cases.}
Hereafter, we use $D^0:={\mathbb{R}}^0$ for the one-point set and $S^{-1}$ for the empty set $\emptyset$.
A {\it cell complex} is a pair of a Hausdorff space $X$ and a family $\{e_{j,{\lambda}_j}:D^j-S^{j-1} \rightarrow X\}$ of homeomorphisms onto subsets of $X$ whose images are mutually disjoint and the union of all of whose images is $X$, and which satisfying the condition that the set ${\overline{e_{j,{\lambda}_j}}}^X-e_{j,{\lambda}_j}$ is always the union of the images of some of maps $e_{j^{\prime},{\lambda}_{j^{\prime}}}$ with $j^{\prime}<j$. Each map is a {\it cell} of the cell complex and the image of the map is also a {\it cell} of this. As a specific class, a {\it CW complex} is a cell complex which satisfies the following. 
\begin{itemize}
\item For each cell of its, its closure in $X$ contains finitely many cells.  
\item Each subset $O \subset X$ is open in $X$ if and only if for each cell $e_{j,{\lambda}_j}(D^j-S^{j-1})$ of it, the intersection $e_{j,{\lambda}_j}(D^j-S^{j-1}) \bigcap O$ is open in $e_{j,{\lambda}_j}(D^j-S^{j-1})$.
\end{itemize}
It is well-known that the dimensions of CW complexes yield a topological invariant for topological spaces regarded as CW complexes. In our paper, a cell complex is of dimension $0$ or $1$ and its dimension is a topological invariant and we call this a {\it weakly almost graph with ends and loops}. In this case, a $1$-cell is called an {\it edge} of the cell complex and a $0$-cell is called a {\it vertex} of this.

A weakly almost graph with ends and loops the closures of some of whose edges are not homeomorphic to $S^1$ is a {\it weakly almost graph with ends}. It is a {\it weakly almost graph} if the closures of some of its edges are always homeomorphic to $D^1$. For a weakly almost graph with ends and loops, we use the terminology "{\it graph}" instead of "weakly almost graph" if the $0$- or $1$-dimensional CW complex is locally finite.

For a weakly almost graph with ends, consider a continuous real-valued function which is injective on each of its edges and we call the pair a {\it digraph}, here.
\subsection{Pre-digraphs.}
A {\it pre-digraph} is defined first in the present paper. This is defined in the following way. 
\begin{Def}
\label{def:1}
A {\it pre-digraph} is a triplet $(G_0,S_{G_0},c_{G_0})$
 of a Hausdorff space $G_0$, its subset $S_{G_0}$ and a continuous function $c_{G_0}:G_0 \rightarrow \mathbb{R}$ satisfying the following.
\begin{itemize}
\item $G_0-S_{G_0}$ is homeomorphic to a disjoint union ${\sqcup}_{e \in E_{G_0,S_{G_0}}} e$ of copies of $D^1-S^0$. Note that each of the copies is homeomorphic to $\mathbb{R}$. By decomposing $G_0$ into the disjoint union of these copies and all elements of $S_{G_0}$, $G_0$ is regarded as a $0$- or $1$-dimensional cell complex with the set $S_{G_0}$ being the set of all $0$-cells of it.
\item On each copy before, the function $c_{G_0}$ is injective.
\end{itemize}
\end{Def}
\begin{Def}
\label{def:2}
Let $(G_0,S_{G_0},c_{G_0})$ be a pre-digraph.
A point $s \in G_0$ is {\it non-finite} or {\it NF} if we cannot choose a neighborhood $N_{s,G_0,s_{G_0}}$ of $G_0$ which is homeomorphic to a graph and has no point of $S_{G_0}-\{s\}$. Let ${\rm NF}(G_0,S_{G_0})$ denote the set of all NF points there.
\end{Def}

\begin{Def}
\label{def:3}
Two pre-digraphs $(G_{0,1},S_{G_{0,1}},c_{G_{0,1}})$ and $(G_{0,2},S_{G_{0,2}},c_{G_{0,2}})$ are {\it isomorphic} or equivalently, a pre-digraph is {\it isomorphic} to the other, if there exists an homeomorphism ${\Phi}_G:G_{0,1} \rightarrow G_{0,2}$ which maps $S_{G_{0,1}}$ onto $S_{G_{0,2}}$ and which yields the relation ${\phi}_G \circ c_{G_{0,1}}=c_{G_{0,2}} \circ {\Phi}_G$ with some homeomorphism ${\phi}_G:\mathbb{R} \rightarrow \mathbb{R}$ preserving the orientations and the orders for $\mathbb{R}$. Such a pair of homeomorphisms is said to be an {\it isomorphism} of the pre-digraphs. 
\end{Def}
\subsection{Graphs having oriented edges defined canonically from pre-digraphs.}
For a pre-digraph $(G_0,S_{G_0},c_{G_0})$, $(G_0-{\rm NF}(G_0,S_{G_0}),S_{G_0} \bigcap (G_0-{\rm NF}(G_0,S_{G_0})),c_{G_0} {\mid}_{G_0-{\rm NF}(G_0,S_{G_0})})$ is also a pre-digraph. By our definitions, $G_0-{\rm NF}(G_0,S_{G_0})$ is naturally a graph with ends, with $S_{G_0} \bigcap (G_0-{\rm NF}(G_0,S_{G_0}))$ being the set of all of its vertices. On the set of all connected components of $G_0-{\rm NF}(G_0,S_{G_0})$, we can define an equivalence relation ${\sim}_{G_0-{\rm NF}(G_0,S_{G_0}),c_{G_0}}$ generated by the rule as follows.
For two connected components $C_{G_0-{\rm NF}(G_0,S_{G_0}),c_{G_0},1}$ and $C_{G_0-{\rm NF}(G_0,S_{G_0}),c_{G_0},2}$ of $G_0-{\rm NF}(G_0,S_{G_0})$, if and only if at least one of the following holds, then $C_{G_0-{\rm NF}(G_0,S_{G_0}),c_{G_0},1} {\sim}_{G_0-{\rm NF}(G_0,S_{G_0}),c_{G_0}} C_{G_0-{\rm NF}(G_0,S_{G_0}),c_{G_0},2}$. Hereafter, we abuse the notation of Definition \ref{def:1}.
\begin{itemize}
\item There exist non-empty families $\{e_{1,j_1}\}_{j_1 \in J_1}, \{e_{2,j_2}\}_{j_2 \in J_2} \subset E_{G_0,S_{G_0}}$
 such that ${\sqcup}_{j_1 \in J_1} e_{1,j_1} \subset C_{G_0-{\rm NF}(G_0,S_{G_0}),c_{G_0},1}$ and ${\sqcup}_{j_2 \in J_2} e_{2,j_2} \subset C_{G_0-{\rm NF}(G_0,S_{G_0}),c_{G_0},2}$, that the closures of ${\sqcup}_{j_1 \in J_1} e_{1,j_1}$ and ${\sqcup}_{j_2 \in J_2} e_{2,j_2}$ in $G_0$ have a same point $s_{e_1,e_2} \in {\rm NF}(G_0,S_{G_0})$ in common, and that the values of $c_{G_0} {\mid}_{\overline{{\sqcup}_{j_1 \in J_1} e_{1,j_1}}^{G_0}}$ and $c_{G_0} {\mid}_{\overline{{\sqcup}_{j_1 \in J_1} e_{2,j_2}}^{G_0}}$ are maximal at $s_{e_1,e_2}$. 
\item There exist non-empty families $\{e_{1,j_1}\}_{j_1 \in J_1}, \{e_{2,j_2}\}_{j_2 \in J_2} \subset E_{G_0,S_{G_0}}$,  such that ${\sqcup}_{j_1 \in J_1} e_{1,j_1} \subset C_{G_0-{\rm NF}(G_0,S_{G_0}),c_{G_0},1}$ and ${\sqcup}_{j_2 \in J_2} e_{2,j_2} \subset C_{G_0-{\rm NF}(G_0,S_{G_0}),c_{G_0},2}$, that the closures of ${\sqcup}_{j_1 \in J_1} e_{1,j_1}$ and ${\sqcup}_{j_2 \in J_2} e_{2,j_2}$ in $G_0$ have a same point $s_{e_1,e_2} \in {\rm NF}(G_0,S_{G_0})$ in common, and that the values of $c_{G_0} {\mid}_{\overline{{\sqcup}_{j_1 \in J_1} e_{1,j_1}}^{G_0}}$ and $c_{G_0} {\mid}_{\overline{{\sqcup}_{j_1 \in J_1} e_{2,j_2}}^{G_0}}$ are minimal at $s_{e_1,e_2}$. 
\end{itemize}
Note that only this rule we may not have an equivalence relation. We consider the unique equivalence relation generated by this. 
We can define a $0$-dimensional or $1$-dimensional cell complex whose edges are oriented.
\begin{itemize}
\item The set of all vertices of it is the set of all equivalence classes $[C_{G_0-{\rm NF}(G_0,S_{G_0}),c_{G_0}}]_{{\sim}_{G_0-{\rm NF}(G_0,S_{G_0}),c_{G_0}}}$.
\item The set of all edges of it is ${\rm NF}(G_0,S_{G_0})$ and each element of this set is oriented and incident to some vertex which is identified with some $[C_{G_0-{\rm NF}(G_0,S_{G_0}),c_{G_0}}]_{{\sim}_{G_0-{\rm NF}(G_0,S_{G_0}),c_{G_0}}}$ according to the rule as follows.
\begin{itemize}
\item An edge of the $1$-dimensional cell complex is an oriented edge departing from the vertex identified with $[C_{G_0-{\rm NF}(G_0,S_{G_0}),c_{G_0}}]_{{\sim}_{G_0-{\rm NF}(G_0,S_{G_0}),c_{G_0}}}$ if and only if there exist $C_{G_0-{\rm NF}(G_0,S_{G_0}),c_{G_0}} \in [C_{G_0-{\rm NF}(G_0,S_{G_0}),c_{G_0}}]_{{\sim}_{G_0-{\rm NF}(G_0,S_{G_0}),c_{G_0}}}$ and a non-empty family $\{e_j\}_{j \in J} \subset E_{G_0,S_{G_0}}$ satisfying $\{e_j\}_{j \in J} \subset C_{G_0-{\rm NF}(G_0,S_{G_0}),c_{G_0}}$ such that the value of $c_{G_0} {\mid}_{\overline{{\sqcup}_{j \in J} e_j}^{G_0}}$ is maximal at some point of $C_{G_0-{\rm NF}(G_0,S_{G_0}),c_{G_0}}$. 
\item An edge of the $1$-dimensional cell complex is an oriented edge entering the vertex identified with $[C_{G_0-{\rm NF}(G_0,S_{G_0}),c_{G_0}}]_{{\sim}_{G_0-{\rm NF}(G_0,S_{G_0}),c_{G_0}}}$ if and only if there exist $C_{G_0-{\rm NF}(G_0,S_{G_0}),c_{G_0}} \in [C_{G_0-{\rm NF}(G_0,S_{G_0}),c_{G_0}}]_{{\sim}_{G_0-{\rm NF}(G_0,S_{G_0}),c_{G_0}}}$ and a non-empty family $\{e_j\}_{j \in J} \subset E_{G_0,S_{G_0}}$ satisfying $\{e_j\}_{j \in J} \subset C_{G_0-{\rm NF}(G_0,S_{G_0}),c_{G_0}}$  such that the value of $c_{G_0} {\mid}_{\overline{{\sqcup}_{j \in J} e_j}^{G_0}}$ is minimal at some point of $C_{G_0-{\rm NF}(G_0,S_{G_0}),c_{G_0}}$. 
\end{itemize}
\end{itemize} 
\begin{Def}
\label{def:4}
We call this the {\it graph diagram for the NF case} or {\it GDNF} of a pre-digraph $(G_0,S_{G_0},c_{G_0})$. We use ${\rm GDNF}(G_0,S_{G_0},c_{G_0})$ for the GDNF of $(G_0,S_{G_0},c_{G_0})$.
\end{Def}

\section{Main Theorems.}
We abuse the notation of the end of the first section, or our introduction. We define a certain class of pairs $(c_1,c_2)$ of the real analytic functions and the $c_1<c_2$-zero-set $(c_1,c_2,D_{c_1,c_2},X_{m,c_1,c_2})$, as presented there. 

\begin{Def}
\label{def:5}
The $c_1<c_2$-zero-set of the real analytic functions is of {\it same asymptotic behaviors at infinities} with respect to the topology category or {\it SAB-at-Infinity-in-Top} if the following hold.
\begin{enumerate}
\item At the infinity $-\infty$, either of the following hold.
\begin{enumerate}
\item The values $c_1(x)$ and $c_2(x)$ converge to a same real number as $x$ diverges to $-\infty$.
\item The values $c_1(x)$ and $c_2(x)$ diverge to $-\infty$ 
as $x$ diverges to $-\infty$.
\item The values $c_1(x)$ and $c_2(x)$ diverge to $+\infty$ 
as $x$ diverges to $-\infty$.
\end{enumerate}
\item At the infinity $+\infty$, either of the following hold.
\begin{enumerate}
\item The values $c_1(x)$ and $c_2(x)$ converge to a same real number as $x$ diverges to $+\infty$.
\item The values $c_1(x)$ and $c_2(x)$ diverge to $-\infty$ 
as $x$ diverges to $+\infty$.
\item The values $c_1(x)$ and $c_2(x)$ diverge to $+\infty$ 
as $x$ diverges to $+\infty$.
\end{enumerate}
\end{enumerate}
\end{Def} 

\subsection{Patterns of ${\rm GDNF}(R_{{\pi}_{m+1,1} {\mid}_{X_{m,c_1,c_2}}},q_{{\pi}_{m+1,1} {\mid}_{X_{m,c_1,c_2}}}(S({\pi}_{m+1,1} {\mid}_{X_{m,c_1,c_2}})),\bar{{\pi}_{m+1,1} {\mid}_{X_{m,c_1,c_2}}})$ for $(c_1,c_2,D_{c_1,c_2},X_{m,c_1,c_2})$ of SAB-at-Infinity-in-Top.}
Theorem \ref{thm:1} (Theorem \ref{thm:1} (\ref{thm:1.2})) is our new result.
\begin{Thm}
\label{thm:1}
For $(c_1,c_2,D_{c_1,c_2},X_{m,c_1,c_2})$ of SAB-at-Infinity-in-Top, the following hold.
\begin{enumerate}
\item \label{thm:1.1} $(R_{{\pi}_{m+1,1} {\mid}_{X_{m,c_1,c_2}}},q_{{\pi}_{m+1,1} {\mid}_{X_{m,c_1,c_2}}}(S({\pi}_{m+1,1} {\mid}_{X_{m,c_1,c_2}})),\bar{{\pi}_{m+1,1} {\mid}_{X_{m,c_1,c_2}}})$ is a pre-digraph.
\item \label{thm:1.2} ${\rm GDNF}(R_{{\pi}_{m+1,1} {\mid}_{X_{m,c_1,c_2}}},q_{{\pi}_{m+1,1} {\mid}_{X_{m,c_1,c_2}}}(S({\pi}_{m+1,1} {\mid}_{X_{m,c_1,c_2}})),\bar{{\pi}_{m+1,1} {\mid}_{X_{m,c_1,c_2}}})$ contains no edge whose closure is homeomorphic to $S^1$.
\item \label{thm:1.3}
${\rm GDNF}(R_{{\pi}_{m+1,1} {\mid}_{X_{m,c_1,c_2}}},q_{{\pi}_{m+1,1} {\mid}_{X_{m,c_1,c_2}}}(S({\pi}_{m+1,1} {\mid}_{X_{m,c_1,c_2}})),\bar{{\pi}_{m+1,1} {\mid}_{X_{m,c_1,c_2}}})$ 
 is isomorphic to either of the following. Furthermore, we can have each case by choosing elementary functions $c_1$ and $c_2$ suitably.
\begin{enumerate}
\item \label{thm:1.3.1} A one-point set.
\item \label{thm:1.3.2} A $1$-dimensional connected cell complex with exactly one vertex and exactly one edge whose closure taken in the cell complex is homeomorphic to $\{t>0\}$.
\item \label{thm:1.3.3} A $1$-dimensional connected cell complex with exactly two vertices and exactly one edge whose closure taken in the cell complex is homeomorphic to $D^1$.

\item \label{thm:1.3.4} A $1$-dimensional connected cell complex with exactly three vertices and exactly two edges whose closures taken in the cell complex are homeomorphic to $D^1$ and which are both oriented as edges departing from one vertex.
\item \label{thm:1.3.5} A $1$-dimensional connected cell complex with exactly three vertices and exactly two edges whose closures taken in the cell complex are homeomorphic to $D^1$ and which are both oriented as edges entering one vertex.
\item \label{thm:1.3.6} A $1$-dimensional connected cell complex with exactly three vertices and exactly two edges whose closures taken in the cell complex are homeomorphic to $D^1$ and which are oriented as an edge departing from one vertex and an edge entering the vertex, respectively.
\end{enumerate}
	\end{enumerate}
\end{Thm}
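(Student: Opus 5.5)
The plan is to understand the Reeb space $R:=R_{{\pi}_{m+1,1} {\mid}_{X_{m,c_1,c_2}}}$ concretely through the projection to $\overline{D_{c_1,c_2}}^{\mathbb{R}^2}$, and only then read off the pre-digraph and its GDNF. For a fixed value $t$, the level set of ${\pi}_{m+1,1} {\mid}_{X_{m,c_1,c_2}}$ projects by ${\pi}_{m+1,2}$ onto the horizontal slice $\{x_1=t\} \cap \overline{D_{c_1,c_2}}^{\mathbb{R}^2}$. Over interior points the fiber is an $(m-2)$-sphere of radius $\sqrt{(t-c_1(x_2))(c_2(x_2)-t)}$, and over boundary points it is a single point. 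Hence the contours correspond bijectively to the connected components of the slices of the closed region, i.e. to maximal closed intervals $I \subset \{x_2 \mid c_1(x_2) \le t \le c_2(x_2)\}$. (For $m=2$ the fiber $S^0$ is disconnected over interior points, but the two points are joined through the boundary endpoints of the interval, so the slice-component count is unchanged; unbounded intervals need a separate remark.) Using this identification, $R$ becomes the Reeb space of the height $x_1$ on $\overline{D_{c_1,c_2}}^{\mathbb{R}^2}$.

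For (\ref{thm:1.1}), I will use Proposition \ref{prop:1}: critical points lie exactly over $(c_j(p),p)$ with $p \in S(c_j)$, each such point being unique. Away from the images of these points, the interval components vary continuously and injectively in $t$. This gives the decomposition of $R-q(S)$ into open arcs on which $\bar{\pi}$ is injective, which is exactly Definition \ref{def:1}. Hausdorffness follows from the closedness of the region together with continuity of $c_1,c_2$.

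For (\ref{thm:1.2}) and (\ref{thm:1.3}), the key input is SAB-at-Infinity-in-Top. Near each end $\pm\infty$, the strip either pinches to a common limit value $a$ (case (a)) or escapes to $\pm\infty$ in $x_1$ (cases (b) and (c)). In case (a), accumulation of critical values toward $a$ can create NF points only at the single value $a$ (or at accumulation values coming from one end). In cases (b) and (c), the strip leaves every compact range of $t$, so the components of slices at large $|t|$ can be analysed. Using the analyticity of $c_j$, each compact range of $x_2$ contains finitely many critical points. Therefore NF points of $R$ can arise only from the two ends, which gives at most two NF points and hence at most two edges of the GDNF. Each such edge corresponds to one NF point, with $\bar{\pi}$ extremal there on the adjacent families of edges. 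An edge whose closure is $S^1$ would require the same class to be both the departing and the entering vertex for one NF point. I will rule this out by showing that the adjacent components on the side $t<a$ and on the side $t>a$ are disjoint pieces of $R-{\rm NF}$ that are never identified through that NF point. This yields (\ref{thm:1.2}).

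The classification in (\ref{thm:1.3}) then follows by case analysis on the two ends: both ends of type (b)/(c) with no NF points; one NF end; or two NF ends, with the orientations determined by whether the approach to each limit is from above or below. Together these give the six pictures, and the ray case (\ref{thm:1.3.2}) arises when one end escapes while the other accumulates. For realization I would write explicit elementary examples, such as $c_1=-e^{-x^2}$, $c_2=e^{-x^2}$, and oscillating variants like $\pm e^{-x^2}(2+\sin x^{2})$ to force accumulation of critical values. I expect the main obstacle to be the rigorous identification of the NF points and of the equivalence classes of $R-{\rm NF}$. This requires controlling how infinitely many critical contours near an end of type (a) accumulate, and in particular checking that they produce a single NF point rather than a more complicated non-graph set. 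I would first attempt this via a neighbourhood basis given by preimages of $(a-\epsilon,a+\epsilon)$ intersected with $|x_2|>N$.
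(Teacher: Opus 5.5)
Your outline follows the paper's route: pass to the slices of $\overline{D_{c_1,c_2}}^{\mathbb{R}^2}$, show that only the at most two contours with unbounded slice component can be NF, then do a case analysis with explicit examples. However, the case analysis goes wrong at its decisive step. Case (\ref{thm:1.3.2}) does \emph{not} come from one end escaping while the other accumulates. An NF contour projects to $\{a\}\times J$ with $J$ an unbounded interval. If this set meets $D_{c_1,c_2}$, which it does whenever $c_1<a<c_2$ near the accumulating end, then $R-{\rm NF}$ has pieces on both sides of it. On one side $\bar{\pi}$ is maximal there, on the other minimal, so (with the separation argument below) you get two vertices and an edge with closure $D^1$, i.e.\ case (\ref{thm:1.3.3}). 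A one-sided edge needs $\{a\}\times J\subset\partial\overline{D_{c_1,c_2}}^{\mathbb{R}^2}$, i.e.\ $c_1=a$ or $c_2=a$ on an unbounded interval. By analyticity this gives $c_1\equiv a$ or $c_2\equiv a$, which forces \emph{both} ends to be of type (a). This ``one of $c_1,c_2$ is constant'' criterion is what the paper uses both to realize (\ref{thm:1.3.2}) and to exclude any other non-$D^1$ edge, and your plan lacks it.

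Likewise, in the two-edge case the orientations are not decided by whether $c_1,c_2$ approach their limits from above or below. If both approach strictly from one side, there is no unbounded slice component and hence no NF point at that end. The orientations are decided by whether the finite endpoint of each ray lies on the graph of $c_1$ or of $c_2$, i.e.\ on which side of each ray the middle region lies. In the paper's example for (\ref{thm:1.3.4}), both endpoints lie on the graph of $c_2$ because $c_2(0)<0$.

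For (\ref{thm:1.2}), the fact that the two sides of one NF point are never related \emph{directly} is automatic from the definition (maximal versus minimal). What must be shown is that they lie in different components of $R-{\rm NF}$ and are not chained together through the other NF point. The missing ingredient is planar separation in the contractible strip $\overline{D_{c_1,c_2}}^{\mathbb{R}^2}$:
\begin{itemize}
\item a ray from a boundary point to an end, or a line joining the two ends, separates the strip;
\item each complementary piece meets it from one side only;
\item a piece cut off by one ray is not adjacent to the other.
\end{itemize}

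There are also problems with your examples and with (\ref{thm:1.1}). Your realizing examples do not work: $\frac{d}{dx}\bigl(e^{-x^2}(2+\sin x^2)\bigr)=2xe^{-x^2}(\cos x^2-\sin x^2-2)$ vanishes only at $x=0$. So there are no accumulating critical values and no NF point. The derivative of the oscillating term has to dominate that of the envelope, as with $\frac{1}{x^2+1}-\frac{\sin(e^{x^2})}{(x^2+1)^2}$ in the paper.

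In (\ref{thm:1.1}), your claim that $R-q(S)$ splits into open arcs fails at the NF contours, which are in general not critical. In the paper's example for (\ref{thm:1.3.3}), $c_1<0<c_2$ everywhere, so the level-$0$ contour contains no critical point, yet critical contours accumulate at it. These contours must be treated separately, as in STEP 1-1 of the paper, and in effect added to the $0$-cells.

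Finally, your $m=2$ caveat is more serious than a remark. Over an unbounded slice component contained in $D_{c_1,c_2}$, the level set is two disjoint curves, and both are limits of the same compact contours. So $R$ is not Hausdorff, and you must assume $m\ge 3$; the paper also passes over this point.
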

\begin{proof}
We prove the theorem in several steps. \\

\noindent STEP 1-1 Proving (\ref{thm:1.1}). \\
\ \\
We prove (\ref{thm:1.1}).
We choose two distinct elements $v_1$ and $v_2$ of $R_{{\pi}_{m+1,1} {\mid}_{X_{m,c_1,c_2}}}$ and
 consider the preimages ${q_{{\pi}_{m+1,1} {\mid}_{X_{m,c_1,c_2}}}}^{-1}(v_i)$ and consider
 small connected neighborhoods of ${\pi}_{m+1,2}({q_{{\pi}_{m+1,1} {\mid}_{X_{m,c_1,c_2}}}}^{-1}(v_i))$ in ${\overline{D_{c_1,c_2}}}^{{\mathbb{R}}^2}$ represented as single connected components $C_{v_i,D_{c_1,c_2}}$ of the intersections of  ${\overline{D_{c_1,c_2}}}^{{\mathbb{R}}^2}$ and small closed sets in ${\mathbb{R}}^2$ of the form $\{t_{v_i,1} \leq t \leq t_{v_i,2}\} \times \mathbb{R}$. Due to our situation, we can choose these neighborhoods disjointly. This means that the Reeb space $R_{{\pi}_{m+1,1} {\mid}_{X_{m,c_1,c_2}}}$ is a Hausdorff space.

From our situation, we can see that except at most two elements of $R_{{\pi}_{m+1,1} {\mid}_{X_{m,c_1,c_2}}}$, for each element $v$ there, ${\pi}_{m+1,2}({q_{{\pi}_{m+1,1} {\mid}_{X_{m,c_1,c_2}}}}^{-1}(v))$ is represented as a closed interval of the form $\{t_v\} \times \{t_{v,1} \leq t \leq t_{v,2}\}$ with three suitably chosen real numbers $t_{v_1} \leq t_{v_2}$ and $t_v$ some of which may agree. By removing these two exceptional elements ${v_1}^{\prime}$ and ${v_2}^{\prime}$, we have a space homeomorphic to a graph whose vertex is defined by the rule for Reeb graphs. This is due to the fact that the restriction of ${\pi}_{m+1,1} {\mid}_{X_{m,c_1,c_2}}$ to $X_{m,c_1,c_2}-{({\pi}_{m+1,1} {\mid}_{X_{m,c_1,c_2}})}^{-1}(\{{v_1}^{\prime},{v_2}^{\prime}\})$ is proper and main ingredients including important result of \cite{saeki1, saeki2}. More precisely,\\
${\pi}_{m+1,1} {\mid}_{X_{m,c_1,c_2}-{({\pi}_{m+1,1} {\mid}_{X_{m,c_1,c_2}})}^{-1}(\{{v_1}^{\prime},{v_2}^{\prime}\})}(S({\pi}_{m+1,1} {\mid}_{X_{m,c_1,c_2}-{({\pi}_{m+1,1} {\mid}_{X_{m,c_1,c_2}})}^{-1}(\{{v_1}^{\prime},{v_2}^{\prime}\})}) \bigcap C_{v_i,D_{c_1,c_2}})$ is a closed and discrete (finite) subset in $\mathbb{R}$, for each set $C_{v_i,D_{c_1,c_2}}$ defined in the way above, and we apply related important result and arguments of \cite{saeki1,saeki2}. 

We consider at most two elements ${v_j}^{\prime}$  of $R_{{\pi}_{m+1,1} {\mid}_{X_{m,c_1,c_2}}}$, presented above. For such an element, ${\pi}_{m+1,2}({q_{{\pi}_{m+1,1} {\mid}_{X_{m,c_1,c_2}}}}^{-1}({v_j}^{\prime}))$ is represented as either of the form $\{t_{{v_j}^{\prime}}\} \times \{t \geq t_{{v_j}^{\prime},0}\}$,  $\{t_{{v_j}^{\prime}}\} \times \{t \leq t_{{v_j}^{\prime},0}\}$, or $\{t_{{v_j}^{\prime}}\} \times \mathbb{R}$, for a suitably chosen real number $t_{{v_j}^{\prime}}$, and this set is a $1$-dimensional smooth connected and non-compact submanifold and a closed subset of ${\mathbb{R}}^2$. Let such a connected and non-compact closed subset be denoted by $T_{{v_j}^{\prime}} \subset {\mathbb{R}}^2$. We consider a small connected neighborhood of ${\pi}_{m+1,2}({q_{{\pi}_{m+1,1} {\mid}_{X_{m,c_1,c_2}}}}^{-1}({v_j}^{\prime}))$ in ${\overline{D_{c_1,c_2}}}^{{\mathbb{R}}^2}$ represented as a single connected component $C_{{v_j}^{\prime},D_{c_1,c_2}}$, defined in the argument above, and containing such a small
closed set $T_{{v_j}^{\prime}} \subset {\mathbb{R}}^2$. We can have a case such that $C_{{v_j}^{\prime},D_{c_1,c_2}}$ contains exactly one connected component of ${{\pi}_{2,1}}^{-1}(t_{{v_j}^{\prime}}) \bigcap {\overline{D_{c_1,c_2}}}^{{\mathbb{R}}^2}$. From our situation, $q_{{\pi}_{m+1,1} {\mid}_{X_{m,c_1,c_2}}}({{\pi}_{m+1,2}}^{-1}(C_{{v_j}^{\prime},D_{c_1,c_2}}))$ is a closed and connected neighborhood of ${v_j}^{\prime}$ in $R_{{\pi}_{m+1,1} {\mid}_{X_{m,c_1,c_2}}}$ and the closure of $q_{{\pi}_{m+1,1} {\mid}_{X_{m,c_1,c_2}}}({{\pi}_{m+1,2}}^{-1}(C_{{v_j}^{\prime},D_{c_1,c_2}}))-\{{v_j}^{\prime}\}$ in $q_{{\pi}_{m+1,1} {\mid}_{X_{m,c_1,c_2}}}({{\pi}_{m+1,2}}^{-1}(C_{{v_j}^{\prime},D_{c_1,c_2}}))$ is $q_{{\pi}_{m+1,1} {\mid}_{X_{m,c_1,c_2}}}({{\pi}_{m+1,2}}^{-1}(C_{{v_j}^{\prime},D_{c_1,c_2}}))$.


These arguments complete the proof of (\ref{thm:1.1}).

Note that we have also presented most of the arguments for proving (\ref{thm:1.1}) in the previous preprints of the author, especially in \cite{kitazawa11, kitazawa12} (\cite{kitazawa9}). We have argued in a self-contained way, here. \\
\ \\
\noindent STEP 1-2 Exposition on (\ref{thm:1.2}). \\
\ \\
 $T_{{v_j}^{\prime}} \subset {\mathbb{R}}^2$ is represented as either of the form $\{t_{{v_j}^{\prime}}\} \times \{t \geq t_{{v_j}^{\prime},0}\}$,  $\{t_{{v_j}^{\prime}}\} \times \{t \leq t_{{v_j}^{\prime},0}\}$, or $\{t_{{v_j}^{\prime}}\} \times \mathbb{R}$, for a suitably chosen real number $t_{{v_j}^{\prime}}$, and this set is a $1$-dimensional smooth connected and non-compact submanifold and a closed subset of ${\mathbb{R}}^2$. 
By our definition, ${\rm GDNF}(R_{{\pi}_{m+1,1} {\mid}_{X_{m,c_1,c_2}}},q_{{\pi}_{m+1,1} {\mid}_{X_{m,c_1,c_2}}}(S({\pi}_{m+1,1} {\mid}_{X_{m,c_1,c_2}})),\bar{{\pi}_{m+1,1} {\mid}_{X_{m,c_1,c_2}}})$ contains no edge whose closure is homeomorphic to $S^1$. \\
\ \\

\noindent STEP 1-3 Proving (\ref{thm:1.3}). \\
\ \\
We prove (\ref{thm:1.3}). This is a new ingredient of the present paper. We also apply arguments presented in the previous preprint of the author, in a self-contained way. 

The (at most two) elements ${v_j}^{\prime}$  of $R_{{\pi}_{m+1,1} {\mid}_{X_{m,c_1,c_2}}}$ can represent at most two edges of ${\rm GDNF}(R_{{\pi}_{m+1,1} {\mid}_{X_{m,c_1,c_2}}},q_{{\pi}_{m+1,1} {\mid}_{X_{m,c_1,c_2}}}(S({\pi}_{m+1,1} {\mid}_{X_{m,c_1,c_2}})),\bar{{\pi}_{m+1,1} {\mid}_{X_{m,c_1,c_2}}})$ and ${\rm GDNF}(R_{{\pi}_{m+1,1} {\mid}_{X_{m,c_1,c_2}}},q_{{\pi}_{m+1,1} {\mid}_{X_{m,c_1,c_2}}}(S({\pi}_{m+1,1} {\mid}_{X_{m,c_1,c_2}})),\bar{{\pi}_{m+1,1} {\mid}_{X_{m,c_1,c_2}}})$ has at most two edges. \\
\ \\
STEP 1-3-1 The case of the non-existence of edges of\\ ${\rm GDNF}(R_{{\pi}_{m+1,1} {\mid}_{X_{m,c_1,c_2}}},q_{{\pi}_{m+1,1} {\mid}_{X_{m,c_1,c_2}}}(S({\pi}_{m+1,1} {\mid}_{X_{m,c_1,c_2}})),\bar{{\pi}_{m+1,1} {\mid}_{X_{m,c_1,c_2}}})$. \\
\ \\
This is for (\ref{thm:1.3.1}). \\
This is realized by a constant function $c_1(x):=0$ and $c_2(x)=\frac{1}{x^2+1}$. This is for the case (\ref{thm:1.3.1}). \\
\ \\
STEP 1-3-2 The case of the existence of exactly one edge of\\ ${\rm GDNF}(R_{{\pi}_{m+1,1} {\mid}_{X_{m,c_1,c_2}}},q_{{\pi}_{m+1,1} {\mid}_{X_{m,c_1,c_2}}}(S({\pi}_{m+1,1} {\mid}_{X_{m,c_1,c_2}})),\bar{{\pi}_{m+1,1} {\mid}_{X_{m,c_1,c_2}}})$.\\
\ \\
First, in the case where exactly one edge of\\ ${\rm GDNF}(R_{{\pi}_{m+1,1} {\mid}_{X_{m,c_1,c_2}}},q_{{\pi}_{m+1,1} {\mid}_{X_{m,c_1,c_2}}}(S({\pi}_{m+1,1} {\mid}_{X_{m,c_1,c_2}})),\bar{{\pi}_{m+1,1} {\mid}_{X_{m,c_1,c_2}}})$ exists, the number of vertices of\\ ${\rm GDNF}(R_{{\pi}_{m+1,1} {\mid}_{X_{m,c_1,c_2}}},q_{{\pi}_{m+1,1} {\mid}_{X_{m,c_1,c_2}}}(S({\pi}_{m+1,1} {\mid}_{X_{m,c_1,c_2}})),\bar{{\pi}_{m+1,1} {\mid}_{X_{m,c_1,c_2}}})$ can be easily shown to be $1$ or $2$. Second, in this case, for the existence of exactly one vertex of\\ ${\rm GDNF}(R_{{\pi}_{m+1,1} {\mid}_{X_{m,c_1,c_2}}},q_{{\pi}_{m+1,1} {\mid}_{X_{m,c_1,c_2}}}(S({\pi}_{m+1,1} {\mid}_{X_{m,c_1,c_2}})),\bar{{\pi}_{m+1,1} {\mid}_{X_{m,c_1,c_2}}})$, either $c_1$ or $c_2$ must be constant and this is for STEP 2-2-1 and (\ref{thm:1.3.2}). \\
\ \\
STEP 1-3-2-1 For (\ref{thm:1.3.2}). \\
\ \\
Suppose that there exists exactly one vertex of\\ ${\rm GDNF}(R_{{\pi}_{m+1,1} {\mid}_{X_{m,c_1,c_2}}},q_{{\pi}_{m+1,1} {\mid}_{X_{m,c_1,c_2}}}(S({\pi}_{m+1,1} {\mid}_{X_{m,c_1,c_2}})),\bar{{\pi}_{m+1,1} {\mid}_{X_{m,c_1,c_2}}})$.

For this case, we define the constant function $c_1(x):=0$ and another real analytic function $c_2(x):=\frac{1}{x^2+1}-\frac{\sin (e^{x^2})}{{(x^2+1)}^2}>0$. 
We consider the 1st derivative ${c_2}^{\prime}:\mathbb{R} \rightarrow \mathbb{R}$ for the function $c_2$. We have ${c_2}^{\prime}(x)=-\frac{2x}{{(x^2+1)}^2}-\frac{2xe^{x^2}(\cos ({e^{x^2}})){(x^2+1)}^2-4x\sin (e^{x^2}) \times (x^2+1)}{{(x^2+1)}^4}$. From this and the orders on the divergence for functions at the infinities $-\infty$ and $+\infty$, $c_2(x)>0$ for some positive number $R>0$ and any real number $x$ satisfying $|x|>R$ and the sets $\{x \mid {c_2}^{\prime}(x)>0\}$, $\{x \mid {c_2}^{\prime}(x)<0\}$ and $\{x \mid {c_2}^{\prime}(x)=0\}$ are all unbounded. From these arguments, these functions $c_1$ and $c_2$ are for the case (\ref{thm:1.3.2}). Related arguments have appeared first in \cite{kitazawa11} and have appeared in \cite{kitazawa12}. Here, we explain them in a self-contained way and do not assume related knowledge. \\
\ \\
STEP 1-3-2-2 For (\ref{thm:1.3.3}). \\
\ \\
Suppose that there exist exactly two vertices of\\ ${\rm GDNF}(R_{{\pi}_{m+1,1} {\mid}_{X_{m,c_1,c_2}}},q_{{\pi}_{m+1,1} {\mid}_{X_{m,c_1,c_2}}}(S({\pi}_{m+1,1} {\mid}_{X_{m,c_1,c_2}})),\bar{{\pi}_{m+1,1} {\mid}_{X_{m,c_1,c_2}}})$. We can see
that this case is realized by functions with $c_1(x):=-\frac{1}{x^2+1}<0$ and $c_2(x):=\frac{1}{x^2+1}-\frac{\sin (e^{x^2})}{{(x^2+1)}^2}>0$. Remember the derivative ${c_2}^{\prime}$ and behavior of related functions, again. \\
\ \\
STEP 1-3-3 The case of the existence of exactly two edges of\\ ${\rm GDNF}(R_{{\pi}_{m+1,1} {\mid}_{X_{m,c_1,c_2}}},q_{{\pi}_{m+1,1} {\mid}_{X_{m,c_1,c_2}}}(S({\pi}_{m+1,1} {\mid}_{X_{m,c_1,c_2}})),\bar{{\pi}_{m+1,1} {\mid}_{X_{m,c_1,c_2}}})$.\\
\ \\

Here, we also need arguments on the derivatives ${c_j}^{\prime}:\mathbb{R} \rightarrow \mathbb{R}$ for the functions $c_1$ and $c_2$, which are also originally from \cite{kitazawa11, kitazawa12} again. For this case, we also present new explicit arguments. Of course we discuss these functions in a self-contained way. \\
\ \\
STEP 1-3-3-1 For (\ref{thm:1.3.4}, \ref{thm:1.3.5}). \\
\ \\

We consider $c_1(x):=-\frac{1}{x^2+1}+\frac{x^2\sin (e^{x^2})}{{(x^2+1)}^k}$ with a sufficiently large positive integer $k$ and $c_2(x):=\frac{1}{x^2+1}-\frac{3}{2{(x^2+1)}^2}$. 
Both the values $c_1(x)$ and $c_2(x)$ converge to $0$ as $x$ diverges to $\pm \infty$. We also have $c_1(x)<c_2(x)$ for any $x \in \mathbb{R}$. We also have $c_2(0)=-\frac{1}{2}<0$. We also have 
$c_1(x)<0$ and $c_2(x)>0$ for some real number $R>0$ and any real number $x$ satisfying $|x|>R$.
We can understand the form of the derivative ${c_1}^{\prime}$ as in STEP 2-2-1 and have similar properties: the sets $\{x \mid {c_1}^{\prime}(x)>0\}$, $\{x \mid {c_1}^{\prime}(x)<0\}$ and $\{x \mid {c_1}^{\prime}(x)=0\}$ are all unbounded. 
By investigating the global behavior, we can see that this is for (\ref{thm:1.3.4}). By exchanging the signs, we have an example for (\ref{thm:1.3.5}). \\
\ \\
STEP 1-3-3-2 For (\ref{thm:1.3.6}). \\
\ \\ In this case, we respect the real-valued function $c_{0,0}(x):=\frac{1}{x^2+1}$. 

We consider $c_1(x):=-\frac{1}{{(e^{-x}-1)}^2+1}$ and $c_{2,0}(x):=\frac{1}{{(e^{x}-1)}^2+1}-\frac{1}{2}$.

We can see $c_1(x)<c_{2,0}(x)$ for $x \in \mathbb{R}$. We can show this by considering the case $x \leq 0$ and the case $x \geq 0$. Both the values $c_1(x)$ and $c_{2,0}(x)$ converge to $0$ as $x$ diverges to $-\infty$. Both the values $c_1(x)$ and $c_{2,0}(x)$ converge to $-\frac{1}{2}$ as $x$ diverges to $\infty$. 
Furthermore, there exists a positive number $R>0$ and for any number $x$ satisfying $x<R$, $c_1(x)<0<c_{2,0}(x)$ and for any number satisfying $x>R$, $c_1(x)<-\frac{1}{2}<c_{2,0}(x)$.
 
We define $t_1:=e^{-x}-1$ and $t_2:=e^{x}-1$ and $c_2(x):=c_{2,0}(x)+\frac{{t_1}^2{\sin}^2 (e^{{t_1}^2})}{{({t_1}^2+1)}^k}+\frac{{t_2}^2{\sin}^2 (e^{{t_2}^2})}{{({t_2}^2+1)}^k}$ for some sufficiently large positive integer $k$. 
Here we remember the relation ${\sin}^2 a=\frac{1-\cos {2a}}{2}$. We also remember the real-valued function $c _{0}(x):=\frac{x^2\sin (e^{x^2})}{{(x^2+1)}^k}$, which is defined in STEP 2-3-1. By applying similar arguments, we have similar properties of the function.

By investigating the global behavior, we can see that this pair $(c_1,c_2)$ is for  (\ref{thm:1.3.6}).  \\
\ \\
STEP 1-3-3-3 The non-existence of cases except (\ref{thm:1.3.4}, \ref{thm:1.3.5}, \ref{thm:1.3.6}) . \\

We must show the non-existence of cases except (\ref{thm:1.3.4}, \ref{thm:1.3.5}, \ref{thm:1.3.6}) to complete the proof .

Suppose that the closure of an edge of ${\rm GDNF}(R_{{\pi}_{m+1,1} {\mid}_{X_{m,c_1,c_2}}},q_{{\pi}_{m+1,1} {\mid}_{X_{m,c_1,c_2}}}(S({\pi}_{m+1,1} {\mid}_{X_{m,c_1,c_2}})),\bar{{\pi}_{m+1,1} {\mid}_{X_{m,c_1,c_2}}})$ is not homeomorphic to $D^1$. By STEP 2 or (\ref{thm:1.2}) and our definition, it must be homeomorphic to $\{t \geq 0\}$ and as an argument for STEP 3-2-1, either $c_1$ and $c_2$ must be constant. This is for STEP 3-2-1 or (\ref{thm:1.3.2}). This is a contradiction. 

Suppose that these two edges of ${\rm GDNF}(R_{{\pi}_{m+1,1} {\mid}_{X_{m,c_1,c_2}}},q_{{\pi}_{m+1,1} {\mid}_{X_{m,c_1,c_2}}}(S({\pi}_{m+1,1} {\mid}_{X_{m,c_1,c_2}})),\bar{{\pi}_{m+1,1} {\mid}_{X_{m,c_1,c_2}}})$ are oriented as edges departing from a same vertex or entering another same vertex. Two sets $T_{{v_j}^{\prime}} \subset {\mathbb{R}}^2$ are represented by the form $\{t_{{v_j}^{\prime},1}\} \times \{t \geq t_{{v_j}^{\prime},1,0}\}$,  $\{t_{{v_j}^{\prime},2}\} \times \{t \leq t_{{v_j}^{\prime},2,0}\}$ with $t_{{v_j}^{\prime},2,0}<t_{{v_j}^{\prime},1,0}$. This implies the case (\ref{thm:1.3.4}) or (\ref{thm:1.3.5}). This is also regarded as a contradiction.
\ \\
 \ \\
This completes the proof of (\ref{thm:1.3}) and the present theorem.
\end{proof}
\subsection{Natural specific classes of pre-digraphs and the $c_1<c_2$-zero-sets of the real analytic functions of SAB-at-Infinity-in-Top.}

By abusing the notation, we define a natural specific class of the $c_1<c_2$-zero-sets of the real analytic functions of SAB-at-Infinity-in-Top.
\begin{Def}
\label{def:6}
For a {\it pre-digraph} $(G_0,S_{G_0},c_{G_0})$, if to each element of the set of all equivalence classes $[C_{G_0-{\rm NF}(G_0,S_{G_0}),c_{G_0}}]_{{\sim}_{G_0-{\rm NF}(G_0,S_{G_0}),c_{G_0}}}$, exactly one connected component $C_{G_0-{\rm NF}(G_0,S_{G_0}),c_{G_0}}$ belongs, then the pre-digraph is {\it normal}.
\end{Def}
\begin{Def}
\label{def:7}
The $c_1<c_2$-zero-set of the real analytic functions is {\it normal} if\\
 $(R_{{\pi}_{m+1,1} {\mid}_{X_{m,c_1,c_2}}},q_{{\pi}_{m+1,1} {\mid}_{X_{m,c_1,c_2}}}(S({\pi}_{m+1,1} {\mid}_{X_{m,c_1,c_2}})),\bar{{\pi}_{m+1,1} {\mid}_{X_{m,c_1,c_2}}})$\\ is a pre-digraph as in Theorem \ref{thm:1} (\ref{thm:1.1}) and normal.
\end{Def}
\begin{Thm}
\label{thm:2}
In Theorem \ref{thm:1}, we have the following.
\begin{enumerate}
\item \label{thm:2.1} In the cases of Theorem \ref{thm:1} {\em (}\ref{thm:1.3.1}{\rm )} and {\rm (}\ref{thm:1.3.2}{\rm )}, the $c_1<c_2$-zero-sets of the real analytic functions must be normal.
\item \label{thm:2.2} In the cases of Theorem \ref{thm:1} {\em (}\ref{thm:1.3.3}{\rm )}, {\em (}\ref{thm:1.3.4}{\rm )} and {\rm (}\ref{thm:1.3.5}{\rm )}, the number of connected components $C_{G_0-{\rm NF}(G_0,S_{G_0}),c_{G_0}}$ belonging to each element of the set of all equivalence classes $[C_{G_0-{\rm NF}(G_0,S_{G_0}),c_{G_0}}]_{{\sim}_{G_0-{\rm NF}(G_0,S_{G_0}),c_{G_0}}}$ can be an arbitrary positive integer.
\end{enumerate}
\end{Thm}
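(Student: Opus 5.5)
Both parts reduce to describing the connected components of $G_0-{\rm NF}(G_0,S_{G_0})$ for $G_0=R_{{\pi}_{m+1,1} {\mid}_{X_{m,c_1,c_2}}}$, and then checking how the generating relation ${\sim}_{G_0-{\rm NF}(G_0,S_{G_0}),c_{G_0}}$ glues them. From STEP 1-1 of the proof of Theorem \ref{thm:1}, the NF points are among the at most two exceptional points ${v_j}^{\prime}$, one for each infinity $\pm\infty$ at which $c_1,c_2$ converge to a common value $a_{\pm}$. The preimage of ${v_j}^{\prime}$ projects to a half-line $T_{{v_j}^{\prime}}$. Removing the NF points leaves a graph with ends. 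Its components correspond to the connected components of $\overline{D_{c_1,c_2}}^{{\mathbb{R}}^2}-\bigcup_j T_{{v_j}^{\prime}}$, read through ${\pi}_{m+1,2}$, since fibres of ${\pi}_{m+1,2}$ over $\overline{D_{c_1,c_2}}$ are connected spheres or points.

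For (\ref{thm:2.1}), I would argue case by case. In case (\ref{thm:1.3.1}) there is no NF point, so $G_0$ itself is connected and there is one component. In case (\ref{thm:1.3.2}) there is exactly one NF point. The closure of the edge is $\{t\geq 0\}$, so removing the NF point leaves the vertex side connected. Concretely, as in STEP 1-3-2 one of $c_1,c_2$ is constant equal to $a$, so $T_{{v}^{\prime}}$ lies on the boundary line $\{x_1=a\}$. Removing it does not disconnect $\overline{D_{c_1,c_2}}$, because the region lies on one side of that line. Thus each equivalence class contains exactly one component, which is normality.

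For (\ref{thm:2.2}) I must realize, for each positive integer $N$, examples in which a class consists of $N$ components. Take case (\ref{thm:1.3.3}) with $c_1=-\frac{1}{x^2+1}$ and $c_2$ as in STEP 1-3-2-2, with limit $a=0$. The line $\{x_1=0\}$ then cuts $D_{c_1,c_2}$ into the regions where $c_1<x_1<0$ and where $0<x_1<c_2$. The second region is further cut wherever $c_2$ dips below $0$. So I would modify $c_2$ by adding a compactly supported-like analytic bump, for example a term of the form $-\lambda\,p(x)/(x^2+1)^{k}$ with $p$ a polynomial. The goal is that $c_2(x)<0$ exactly on $N-1$ bounded intervals while the oscillation at infinity is kept. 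Each such dip splits the half $\{x_1\geq 0\}\cap\overline{D}$ into further pieces. All pieces touching the NF point from above are glued by the first bullet of the relation, since $\bar{c}$ is minimal at the NF point on them. Pieces below are glued by the second bullet. So one can count that each class has a prescribed number of components. By using dips of $c_1$ above $0$ symmetrically one can adjust the two classes independently. Cases (\ref{thm:1.3.4}) and (\ref{thm:1.3.5}) are handled the same way, starting from the examples in STEP 1-3-3-1 and adding such bumps away from infinity so as not to change the asymptotic oscillation.

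The main obstacle is the bookkeeping in (\ref{thm:2.2}). I must verify that the explicit perturbed elementary functions still satisfy $c_1<c_2$ and keep the sign patterns and unbounded oscillation near $\pm\infty$, so that the GDNF type is unchanged. I must also check that every component created by a dip is indeed adjacent to the NF point, as opposed to being a bounded component detached from it. I would first try to ensure this by choosing the bump supports so that each piece extends toward $\pm\infty$ along the boundary curve near $T_{{v_j}^{\prime}}$. Failing that, I would count only the components whose closures contain the NF point and argue that the others cannot occur, using the fact that $G_0$ is connected.
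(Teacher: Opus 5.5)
Your part (\ref{thm:2.1}) is fine and matches the paper's argument. In case (\ref{thm:1.3.1}) there is no NF point, and in case (\ref{thm:1.3.2}) one of $c_1,c_2$ is constant, so removing the unique NF point leaves a connected set. Part (\ref{thm:2.2}), however, has a genuine gap. Making $c_2$ \emph{strictly negative} on $N-1$ bounded intervals does not add components to a class; it changes the NF set instead. Suppose $c_1<0$ everywhere and $c_2<0$ exactly on $(\alpha_1,\beta_1),\dots,(\alpha_{N-1},\beta_{N-1})$, with $\alpha_1<\beta_1<\alpha_2<\cdots<\beta_{N-1}$.
\begin{itemize}
\item Then $\{x_1=0\}\cap\overline{D_{c_1,c_2}}^{\mathbb{R}^2}$ is no longer the single line $\{0\}\times\mathbb{R}$. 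It splits into distinct contours: the half-lines $\{0\}\times(-\infty,\alpha_1]$ and $\{0\}\times[\beta_{N-1},\infty)$, and the compact segments $\{0\}\times[\beta_i,\alpha_{i+1}]$.
\item Only the two half-lines are NF points; both are, since you keep the oscillation at $\pm\infty$. The segments are ordinary points of $G_0-{\rm NF}(G_0,S_{G_0})$.
\item Each bounded upper piece over $(\beta_i,\alpha_{i+1})$ is joined through the segment $\{0\}\times[\beta_i,\alpha_{i+1}]$ to the lower region $\{x_1<0\}$, which is connected.
\item Hence $G_0-{\rm NF}(G_0,S_{G_0})$ has exactly three components: the lower one with all middle pieces attached, and the two unbounded upper pieces. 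These two touch \emph{different} NF points, so they are not glued.
\end{itemize}
The result is the GDNF of case (\ref{thm:1.3.4}), not (\ref{thm:1.3.3}), and every class is a singleton. Your fallback of letting each piece reach $\pm\infty$ is impossible, because pieces between consecutive dips are bounded in $x_2$. The same failure occurs for your dips of $c_1$ above $0$ and for your modifications in cases (\ref{thm:1.3.4}) and (\ref{thm:1.3.5}). You have also swapped the bullets: the first one concerns maxima, the second minima.

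The missing idea, which the paper uses, is to make the function \emph{touch} the limiting value without crossing it. The paper multiplies by $\prod_{j=1}^{k-1}(x-a_j)^2/(x^2+1)^{k}$, so that, for instance, $c_2\geq 0$ with even-order zeros exactly at the $a_j$; similarly $c_1\leq 0$. Then $\{x_1=0\}\cap\overline{D_{c_1,c_2}}^{\mathbb{R}^2}$ stays one connected line, hence one NF point, since the oscillation at infinity is preserved. Meanwhile $\{x_1>0\}\cap\overline{D_{c_1,c_2}}^{\mathbb{R}^2}$ splits into $k$ pieces lying over the $k$ intervals complementary to $\{a_j\}$, and the closure of each piece meets that line. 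These give $k$ distinct components of $G_0-{\rm NF}(G_0,S_{G_0})$, all glued into one class by the minimality bullet. For (\ref{thm:1.3.4}) and (\ref{thm:1.3.5}) the paper does the same with $|a_j|$ large. The tangency points then lie on the two NF half-lines, not in the region where $c_2<0$.
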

\begin{proof}
We have the statement (\ref{thm:2.1}) by the definitions and construction in our proof of Theorem \ref{thm:1}: it is important that in the case of Theorem \ref{thm:1} (\ref{thm:1.3.2}), either $c_1$ or $c_2$ must be constant.

We prove the statement (\ref{thm:2.2}). For this, it is sufficient to revise our proof of Theorem \ref{thm:1} in a suitable way.

Hereafter, the product denoted by ${\prod}_{j=1}^{0} a_j$ is $1$, where $a_j$ is used for a suitable number.
 
We explain our revision of (\ref{thm:1.3.3}) of Theorem \ref{thm:1} and its proof (STEP 1-2-2-2). First we choose arbitrary positive integers $k_1$ and $k_2$, choose a sufficiently large number $R_{k_1,k_2}$ depending on them and sequences $\{a_{k_1,k_2,R_{k_1,k_2},i,j}\}_{j=1}^{k_i-1}$ of real numbers whose absolute values are greater than $R_{k_1,k_2}$ and mutually distinct for $\{a_{k_1,k_2,R_{k_1,k_2},1,j}\}_{j=1}^{k_1} \sqcup \{a_{k_1,k_2,R_{k_1,k_2},2,j}\}_{j=1}^{k_2}$. We use $c_1(x):=-\frac{{\prod}_{j=1}^{k_1-1} {(x-a_{k_1,k_2,R_{k_1,k_2},1,j})}^2}{{(x^2+1)}^{k_1}} \leq 0$ and $c_2(x):=\frac{{\prod}_{j=1}^{k_2-1} {(x-a_{k_1,k_2,R_{k_1,k_2},2,j})}^2}{{(x^2+1)}^{k_2}} \times (\frac{1}{x^2+1}-\frac{\sin (e^{x^2})}{{(x^2+1)}^2}) \geq 0$. We can see that $c_1(x)<c_2(x)$ for $x \in \mathbb{R}$.

We explain our revision of (\ref{thm:1.3.4}) and (\ref{thm:1.3.5}) of Theorem \ref{thm:1} and its proof (STEP 1-3-3-1).

First we choose arbitrary positive integers $k_1$ and $k_2$. We choose a sufficiently large number $R_{k_1,k_2}>0$ depending on them. We also choose a number $T_{k_1,k_2}>1$ sufficiently close to $1$, depending on $k_1$ and $k_2$. In addition, We also choose a sufficiently large integer $k_{k_1,k_2}>0$, depending on $k_1$ and $k_2$.

Depending on these numbers, we choose sequences $\{a_{k_1,k_2,R_{k_1,k_2},T_{k_1,k_2},k_{k_1,k_2},i,j}\}_{j=1}^{k_i-1}$ of real numbers whose absolute values are greater than $R_{k_1,k_2}$ and mutually distinct for $\{a_{k_1,k_2,R_{k_1,k_2},T_{k_1,k_2},k_{k_1,k_2}, 1,j}\}_{j=1}^{k_1} \sqcup \{a_{k_1,k_2,R_{k_1,k_2},T_{k_1,k_2},k_{k_1,k_2}, 2,j}\}_{j=1}^{k_2}$, and we use $c_1(x):=-\frac{{\prod}_{j=1}^{k_1-1} {(x-a_{k_1,k_2,R_{k_1,k_2},T_{k_1,k_2},k_{k_1,k_2}, 1,j})}^2}{{(x^2+1)}^{k_1}} \times (\frac{1}{x^2+1}-\frac{x^2\sin (e^{x^2})}{{(x^2+1)}^{k_{k_1,k_2}}}) \leq 0$ and $c_2(x):=\frac{{\prod}_{j=1}^{k_2-1} {(x-a_{k_1,k_2, R_{k_1,k_2},T_{k_1,k_2},k_{k_1,k_2},2,j})}^2}{{(x^2+1)}^{k_2}} \times (\frac{1}{x^2+1}-\frac{T_{k_1,k_2}}{{(x^2+1)}^2})$. We also need and we can also choose the numbers in such a way that the following hold.
\begin{itemize}
	\item  It holds that $c_2(x) \geq 0$ except for $x$ in a suitable (small) range $-r_{k_1,k_2,R_{k_1,k_2},T_{k_1,k_2},k_{k_1,k_2},1}<x<r_{k_1,k_2,R_{k_1,k_2},T_{k_1,k_2},k_{k_1,k_2},2}$, where $r_{k_1,k_2,R_{k_1,k_2},T_{k_1,k_2},k_{k_1,k_2},1}>0$ and $r_{k_1,k_2,R_{k_1,k_2},T_{k_1,k_2},k_{k_1,k_2},2}>0$ depend on $(k_1,k_2,R_{k_1,k_2},T_{k_1,k_2},k_{k_1,k_2})$.
\item It holds that $c_1(x)<-\frac{1}{2} \times \frac{{\prod}_{j=1}^{k_1-1} {(x-a_{k_1,k_2,R_{k_1,k_2},T_{k_1,k_2},k_{k_1,k_2},1,j})}^2}{{(x^2+1)}^{k_1}}$ for $x$ satisfying $-r_{k_1,k_2,R_{k_1,k_2},T_{k_1,k_2},k_{k_1,k_2},1}<x<r_{k_1,k_2,R_{k_1,k_2},T_{k_1,k_2},k_{k_1,k_2},2}$.
\item  It holds that $c_2(x)>-\frac{1}{4} \times \frac{{\prod}_{j=1}^{k_2-1} {(x-a_{k_1,k_2, R_{k_1,k_2},T_{k_1,k_2},k_{k_1,k_2},2,j})}^2}{{(x^2+1)}^{k_2}}$ for $x$ satisfying $-r_{k_1,k_2,R_{k_1,k_2},T_{k_1,k_2},k_{k_1,k_2},1}<x<r_{k_1,k_2,R_{k_1,k_2},T_{k_1,k_2},k_{k_1,k_2},2}$.
\item It holds that $c_1(x)<c_2(x)$ for $x$ satisfying $-r_{k_1,k_2,R_{k_1,k_2},T_{k_1,k_2},k_{k_1,k_2},1}<x<r_{k_1,k_2,R_{k_1,k_2},T_{k_1,k_2},k_{k_1,k_2},2}$.
\end{itemize}

This completes the proof.
\end{proof}
\subsection{Canonical compactifications.}
\begin{Thm}
\label{thm:3}
Given a $c_1<c_2$-zero-set $(c_1,c_2,D_{c_1,c_2},X_{m,c_1,c_2})$ of SAB-at-Infinity-in-Top such that $c_1(x)$ and $c_2(x)$ both converge to $a_1 \in \mathbb{R}$ and $a_2 \in \mathbb{R}$, as $x$ diverges to $-\infty$ and $+\infty$, respectively. We have a continuous function $f_{S^m,(c_1,c_2,D_{c_1,c_2},X_{m,c_1,c_2})}$ on $S^m$ which is smooth outside the two-point set $\{(0\cdots,1),(0\cdots, -1)\} \subset S^m$ canonically, and this also gives a pre-digraph\\ $(R_{f_{S^m,(c_1,c_2,D_{c_1,c_2},X_{m,c_1,c_2})}},\\
q_{f_{S^m,(c_1,c_2,D_{c_1,c_2},X_{m,c_1,c_2})}}(S(f_{S^m,(c_1,c_2,D_{c_1,c_2},X_{m,c_1,c_2})} {\mid}_{S^m-\{(0\cdots,1),(0\cdots, -1)\}}))\\
\bigcup q_{f_{S^m,(c_1,c_2,D_{c_1,c_2},X_{m,c_1,c_2})}}(\{(0\cdots,1),(0\cdots, -1)\}),\bar{f_{S^m,(c_1,c_2,D_{c_1,c_2},X_{m,c_1,c_2})}})$.
\end{Thm}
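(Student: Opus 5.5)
The plan is to construct $f_{S^m,(c_1,c_2,D_{c_1,c_2},X_{m,c_1,c_2})}$ by compactifying $X_{m,c_1,c_2}$ in the $x_2$-direction, and then to reuse the local analysis of STEP 1-1 in the proof of Theorem \ref{thm:1} to verify the pre-digraph conditions.

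\textbf{Step 1: the diffeomorphism.} First observe that $X_{m,c_1,c_2}$ is diffeomorphic to $\mathbb{R} \times S^{m-1}$. Over each $x_2 \in \mathbb{R}$, the slice $\{(x_1,(y_j)) \mid (x_1-c_1(x_2))(c_2(x_2)-x_1)={\Sigma}_{j=1}^{m-1} {y_j}^2\}$ is a round $(m-2)$-sphere-like ellipsoid. Precisely, with $u:=(2x_1-c_1(x_2)-c_2(x_2))/(c_2(x_2)-c_1(x_2))$ and $w_j:=2y_j/(c_2(x_2)-c_1(x_2))$, the slice becomes the unit sphere $u^2+{\Sigma}_j {w_j}^2=1$, i.e.\ a copy of $S^{m-1}$. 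Writing $\phi:\mathbb{R} \rightarrow (-1,1)$ for $\phi(x_2):=x_2/\sqrt{x_2^2+1}$ (or $\tanh$), define
\[
\Psi(x_1,x_2,(y_j)):=(\sqrt{1-\phi(x_2)^2}\,(u,(w_j)),\ \phi(x_2)) \in S^m .
\]
This is a diffeomorphism onto $S^m-\{(0\cdots,1),(0\cdots,-1)\}$.

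\textbf{Step 2: defining $f$.} Set $f:={\pi}_{m+1,1}{\mid}_{X_{m,c_1,c_2}} \circ \Psi^{-1}$ on the complement of the poles. At the poles, put $f(0\cdots,-1):=a_1$ and $f(0\cdots,1):=a_2$. Smoothness away from the poles is immediate. For continuity at a pole, note that a neighborhood of $(0\cdots,\pm 1)$ corresponds to $\{\pm x_2>R\}$, where every value $x_1$ lies in $[c_1(x_2),c_2(x_2)]$. Since both $c_1$ and $c_2$ converge to the same limit $a_1$ or $a_2$, the function $f$ converges to that limit. This is exactly where the hypothesis of case (a) of Definition \ref{def:5} at both ends is used. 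The construction is canonical up to the choice of $\phi$, and any two choices are conjugate by a diffeomorphism.

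\textbf{Step 3: Hausdorffness and the vertex set.} Next verify Definition \ref{def:1} for the Reeb space with $S$ equal to the images of the critical points together with the images of the two poles. Contours of $f$ not passing through a pole are contours of ${\pi}_{m+1,1}{\mid}_{X_{m,c_1,c_2}}$ whose image under ${\pi}_{m+1,2}$ is compact. For these we repeat the STEP 1-1 argument verbatim: the disjoint closed neighborhoods $C_{v,D_{c_1,c_2}}$ separate points, and properness on the complement yields the local graph structure, by Proposition \ref{prop:1} and \cite{saeki1,saeki2}. The contour through a pole $(0\cdots,\pm 1)$ is the union of the pole with the non-compact contour(s) $T_{{v_j}^{\prime}}$ of value $a_1$ or $a_2$ accumulating at that end. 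Note that the two poles may lie in one contour when $a_1=a_2$ and the level set connects them. Separating such a point from the others uses the same strip neighborhoods $\{t_1 \leq t \leq t_2\}\times\mathbb{R}$, now compactified by adding the poles. Since $S^m$ is compact, these neighborhoods become closed neighborhoods in a compact metrizable space, and Hausdorffness follows as in \cite{gelbukh3,gelbukh4}.

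\textbf{Step 4: edges, and the main obstacle.} Finally, $R_f$ minus the chosen vertex set must be shown to be a disjoint union of open arcs on which $\bar{f}$ is injective. Away from the pole contours this follows because $f$ is smooth with no critical points there, so contours vary in a product fashion over open intervals of values; Proposition \ref{prop:1} shows that the critical contours are exactly the images of the points over $S(c_j)$. The main obstacle is the neighborhood of a pole contour. When $c_1$ or $c_2$ oscillates near $\pm\infty$, infinitely many critical values accumulate at $a_i$, so the vertex set is not discrete there. One must still check that each component of the complement of the vertex set is an open arc and not something wilder. I would handle this by restricting to a strip $\{x_2>R\}$ and showing directly that every regular contour there is a compact sphere over a compact interval $\{t\}\times[t',t'']$. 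Then each point of $R_f - S$ has an arc neighborhood on which $\bar{f}$ is monotone, and this gives the pre-digraph structure, with the pole contours typically becoming NF points.
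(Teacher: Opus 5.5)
Your proposal is correct and follows the paper's route. Both build a diffeomorphism of $S^m-\{(0\cdots,1),(0\cdots,-1)\}$ onto $X_{m,c_1,c_2}$, which you make explicit and the paper only asserts, and extend by the common limits $a_1,a_2$ at the poles; the pre-digraph property then comes from Hausdorffness of the Reeb space (the paper cites Gelbukh) plus the product structure near non-vertex contours, which are compact spheres, a verification the paper simply calls immediate. A few minor fixes: each $x_2$-slice is an $(m-1)$-sphere, not an $(m-2)$-sphere; two choices of $\phi$ are conjugate only by a homeomorphism that is smooth off the poles; and you should state that local injectivity of $\bar{f}$ rules out circle components of $R_f$ minus the vertex set and makes $\bar{f}$ injective on each arc component.
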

\begin{proof}
Due to the construction, there exists some diffeomorphism ${\phi}_{S^m,(c_1,c_2,D_{c_1,c_2},X_{m,c_1,c_2})}$ from $S^m-\{(0 \cdots,1),(0 \cdots, -1)\}$ onto $X_{m,c_1,c_2}$ and a desired function $f_{S^m,(c_1,c_2,D_{c_1,c_2},X_{m,c_1,c_2})}$ can be defined as follows.
\begin{itemize}
\item The restriction to $S^m-\{(0 \cdots,1),(0 \cdots, -1)\}$ is ${\pi}_{m+1,1} {\mid}_{X_{m,c_1,c_2}} \circ {\phi}_{S^m,(c_1,c_2,D_{c_1,c_2},X_{m,c_1,c_2})}$.
\item At $(0 \cdots,-1)$ and $(0 \cdots,1)$, the values are $a_1$ and $a_2$, respectively.
\end{itemize}
For {\it Reeb graphs} of continuous functions on (smooth) manifolds with no boundary such that the restrictions to subsets whose complementary sets in the manifolds are of Lebesgue measure $0$ are smooth, see also the pioneering preprint \cite{kitazawa5} and related preprint \cite{kitazawa10}. We do not assume non-trivial knowledge on the preprints, in the present paper, of course.

We go back to exposition of the proof. We have a desired pre-digraph by the construction. For Reeb spaces of continuous functions on (smooth) closed manifolds, see also \cite{gelbukh1}. They are metrizable Peano continua.
From this, we can also show that\\
$(R_{f_{S^m,(c_1,c_2,D_{c_1,c_2},X_{m,c_1,c_2})}},\\
q_{f_{S^m,(c_1,c_2,D_{c_1,c_2},X_{m,c_1,c_2})}}(S(f_{S^m,(c_1,c_2,D_{c_1,c_2},X_{m,c_1,c_2})} {\mid}_{S^m-\{(0\cdots,1),(0\cdots, -1)\}}))\\
\bigcup q_{f_{S^m,(c_1,c_2,D_{c_1,c_2},X_{m,c_1,c_2})}}(\{(0\cdots,1),(0\cdots, -1)\}),\bar{f_{S^m,(c_1,c_2,D_{c_1,c_2},X_{m,c_1,c_2})}})$ is a pre-digraph, immediately. 
\end{proof}
\begin{Rem}
\label{rem:1}
In Theorem \ref{thm:3}, we drop the condition that $c_1(x)$ and $c_2(x)$ both converge to $a_1 \in \mathbb{R}$ and $a_2 \in \mathbb{R}$, as $x$ diverges to $-\infty$ and $+\infty$, respectively. In other words, they may diverge. In this general case, we have a continuous map to $S^1$. For this, we can define its {\it Reeb space} similarly and the Reeb space enjoys similar topological properties and combinatorial ones, which is shown by the presented arguments and facts, similarly. The proof is left to readers.
\end{Rem}
\begin{Thm}
\label{thm:4}
We consider the function $f_{S^m,(c_1,c_2,D_{c_1,c_2},X_{m,c_1,c_2})}$ defined in {\rm (}the proof of{\rm )} Theorem \ref{thm:3}. If in Theorem \ref{thm:1}, for the $c_1<c_2$-zero-set $(c_1,c_2,D_{c_1,c_2},X_{m,c_1,c_2})$ of SAB-at-Infinity-in-Top such that $c_{1}(x)$ and $c_{2}(x)$ both converge to $a_1 \in \mathbb{R}$ and $a_2 \in \mathbb{R}$, as $x$ diverges to $-\infty$ and $+\infty$, respectively, either of {\rm (}\ref{thm:1.3.2}, \ref{thm:1.3.4}, \ref{thm:1.3.5}, \ref{thm:1.3.6}{\rm )} holds, then\\ ${\rm GDNF}(R_{f_{S^m,(c_1,c_2,D_{c_1,c_2},X_{m,c_1,c_2})}},\\
q_{f_{S^m,(c_1,c_2,D_{c_1,c_2},X_{m,c_1,c_2})}}(S(f_{S^m,(c_1,c_2,D_{c_1,c_2},X_{m,c_1,c_2})} {\mid}_{S^m-\{(0\cdots,1),(0\cdots, -1)\}}))\\
 \bigcup q_{f_{S^m,(c_1,c_2,D_{c_1,c_2},X_{m,c_1,c_2})}}(\{(0\cdots,1),(0\cdots, -1)\}),\bar{f_{S^m,(c_1,c_2,D_{c_1,c_2},X_{m,c_1,c_2})}})$ is isomorphic to ${\rm GDNF}(R_{{\pi}_{m+1,1} {\mid}_{X_{m,c_1,c_2}}},q_{{\pi}_{m+1,1} {\mid}_{X_{m,c_1,c_2}}}(S({\pi}_{m+1,1} {\mid}_{X_{m,c_1,c_2}})),\bar{{\pi}_{m+1,1} {\mid}_{X_{m,c_1,c_2}}})$.
\end{Thm}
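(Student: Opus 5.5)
The plan is to compare the two Reeb spaces directly through the diffeomorphism ${\phi}_{S^m,(c_1,c_2,D_{c_1,c_2},X_{m,c_1,c_2})}$ from the proof of Theorem \ref{thm:3}. Write $f:=f_{S^m,(c_1,c_2,D_{c_1,c_2},X_{m,c_1,c_2})}$, $g:={\pi}_{m+1,1} {\mid}_{X_{m,c_1,c_2}}$ and $N:=\{(0\cdots,1),(0\cdots,-1)\}$. Since $f {\mid}_{S^m-N}=g \circ \phi$, every contour of $g$ not meeting the closure of the ends of $X_{m,c_1,c_2}$ is carried to a contour of $f$. The only contours that change are those in the levels $a_1$ and $a_2$. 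In $f^{-1}(a_i)$, the pole is glued to each contour $q_g^{-1}(v)$ with $\bar{g}(v)=a_i$ that is non-compact toward the corresponding end. These are exactly the exceptional elements ${v_j}^{\prime}$ of STEP 1-1, whose images under ${\pi}_{m+1,2}$ are the half-lines or lines $T_{{v_j}^{\prime}}$ escaping to $x_2 \to \mp\infty$.

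\textbf{Step 1: the quotient map.} I will produce a continuous surjection $\Psi:R_g \rightarrow R_f$ induced by $\phi$. It should be injective away from the ${v_j}^{\prime}$. It sends each ${v_j}^{\prime}$ to the class of the pole at the matching end, and it satisfies $\bar{f}\circ\Psi=\bar{g}$. I then check the key point: in each of the cases (\ref{thm:1.3.2}), (\ref{thm:1.3.4}), (\ref{thm:1.3.5}), (\ref{thm:1.3.6}), each end $x_2\to\pm\infty$ contributes at most one element ${v_j}^{\prime}$, and distinct ${v_j}^{\prime}$ go to distinct poles. This uses the description of $T_{{v_j}^{\prime}}$ in STEP 1-3-3-3: one is of the form $\{t\geq t_{{v_j}^{\prime},1,0}\}$ and the other of the form $\{t\leq t_{{v_j}^{\prime},2,0}\}$. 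Also, in case (\ref{thm:1.3.6}) one has $a_1\neq a_2$. Hence $\Psi$ is a continuous bijection. I will upgrade it to a homeomorphism locally. Away from the ${v_j}^{\prime}$ this holds because $\phi$ is a diffeomorphism. Near ${v_j}^{\prime}$, the saturated neighborhoods ${{\pi}_{m+1,2}}^{-1}(C_{{v_j}^{\prime},D_{c_1,c_2}})$ from STEP 1-1, together with a small cap around the pole, form saturated neighborhoods for $f$. Here I use that $c_1,c_2\to a_i$, so small caps around the pole map into small neighborhoods of level $a_i$ lying inside these sets.

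\textbf{Step 2: vertex sets, NF points and the GDNF.} $\Psi$ maps $q_g(S(g))$ onto $q_f(S(f{\mid}_{S^m-N}))$, by Proposition \ref{prop:1} and since $\phi$ is a diffeomorphism. The extra marked points $q_f(N)$ are exactly $\Psi({v_j}^{\prime})$. I will argue that in the cases considered the ${v_j}^{\prime}$ are NF points of $R_g$. They are accumulation points of infinitely many vertices, since $\{c_j^{\prime}=0\}$ is unbounded by STEPs 1-3-2-1 and 1-3-3. Therefore they are already non-finite, and adding them to the vertex set changes neither ${\rm NF}$ nor the components of the complement of ${\rm NF}$. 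Consequently $\Psi$, paired with ${\phi}_G={\rm id}_{\mathbb{R}}$, is an isomorphism of pre-digraphs in the sense of Definition \ref{def:3}. The equivalence relation defining the vertices of the GDNF, and the orientation rule for its edges, are both phrased only in terms of the pre-digraph. They therefore transfer, and the GDNFs are isomorphic.

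\textbf{Main obstacle.} The delicate point is the local homeomorphism near the poles, together with the claim that the pole class is a single NF point and does not merge two ${v_j}^{\prime}$'s. This is exactly why case (\ref{thm:1.3.1}) (and (\ref{thm:1.3.3}), where the merging or finiteness behaviour may differ) is excluded. I would first handle it by an explicit neighborhood-basis argument using the convergence $c_1,c_2\to a_i$ and the closedness of $X_{m,c_1,c_2}$ in ${\mathbb{R}}^{m+1}$.
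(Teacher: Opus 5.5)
\textbf{There is a genuine gap.} Step~1 claims more than is true, and Step~2 depends on it.

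\textbf{The homeomorphism claim fails.} The map $\Psi$ is a continuous bijection (granting the point about (\ref{thm:1.3.2}) below). It is \emph{not} a homeomorphism in the situations of the theorem, so $(\Psi,{\rm id}_{\mathbb{R}})$ is not an isomorphism of pre-digraphs.

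The slab sets ${\pi_{m+1,2}}^{-1}(C_{v_j',D_{c_1,c_2}})$ are neighbourhoods of $v_j'$, but they are not a neighbourhood base in the quotient topology of $R_g$. A saturated open set around the non-compact contour $q_g^{-1}(v_j')$ only has to contain, around each of its points, a neighbourhood whose size may shrink as $x_2\to\mp\infty$. Saturation then forces in the trunk contours, which cross a fixed compact range of $x_2$. It does not force in contours that live only far out.

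Concretely, take the paper's example for (\ref{thm:1.3.4}). There $c_1$ has strict local minima $p_n\to-\infty$. Their point-contours $b_n$, at levels $c_1(p_n)\to 0$, are tips of short branches hanging off the trunk below $v_1'$. Now form the union of three pieces: the trunk near level $0$, only short pieces of these branches next to their merge points, and the contours just above level $0$. This is a saturated open neighbourhood of $v_1'$ containing no $b_n$. Hence $\{b_n\}$ is closed and discrete in $R_g$, whereas $\Psi(b_n)\to\Psi(v_1')$ in $R_f$ because the points tend to the pole.

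In fact $R_g$ is non-compact while $R_f$, an image of $S^m$, is compact. So no isomorphism in the sense of Definition \ref{def:3} exists at all. The $\sin(e^{x^2})$-type examples for (\ref{thm:1.3.2}), (\ref{thm:1.3.5}) and (\ref{thm:1.3.6}) behave the same way. The neighbourhood-basis argument you plan for the ``main obstacle'' therefore cannot be carried out.

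\textbf{What does hold is weaker, and it is what the paper's one-line proof relies on.} $\Psi$ restricts to a homeomorphism $R_g-\{v_j'\}\to R_f-\{\Psi(v_j')\}$ compatible with $\bar g$, $\bar f$ and the marked points, since both sides come from open saturated sets away from the ends. The GDNF only uses three pieces of data: the NF set, the components of its complement, and the side from which each component reaches each NF point. This data transfers:
\begin{itemize}
\item $\Psi(v_j')$ is still NF. What accumulates at $v_j'$ in $R_g$ are the merge vertices on the trunk (not the tips), and $\Psi$ is continuous.
\item Adjacency passes from $R_g$ to $R_f$ by continuity of $\Psi$.
\item Conversely, a component that approaches a pole contains points over some finite $x_2$. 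From such a point a vertical segment in $\overline{D_{c_1,c_2}}$ runs to ${\pi_{m+1,2}}(q_g^{-1}(v_j'))$, so the component already reaches $v_j'$ in $R_g$ from the same side.
\end{itemize}
So you should compare the GDNFs directly, not the pre-digraphs.

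\textbf{Surjectivity in case (\ref{thm:1.3.2}) is also unproved.} There is only one NF contour there, and ``at most one $v_j'$ per end'' does not show that it absorbs \emph{both} poles. If it did not, the other pole would add a point to $R_f$, possibly an NF point. This is exactly what the paper's remark supplies: in (\ref{thm:1.3.2}) one of $c_1,c_2$ is constant, equal to $a_1=a_2$. The exceptional contour then lies over the whole $x_2$-line, and both poles are glued to it.
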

\begin{proof}
This immediately follows from our definitions and formulation. Remember that for the case of Theorem \ref{thm:1} (\ref{thm:1.3.2}), either $c_1$ or $c_2$ must be constant.

\end{proof}

We present a kind of counterexamples related to Theorem \ref{thm:4}.

Let $c_{A,1,1}:=\frac{1}{2(x^2+1)}-\frac{\sin (e^{x^2})}{2{(x^2+1)}^2}>0$, $c_{A,1,2}:=\frac{1}{x^2+1}$ and we have $0<c_{A,1,1}(x)<c_{A,1,2}(x)$ for any $x \in \mathbb{R}$. This is a case for Theorem \ref{thm:1} (\ref{thm:1.3.1})
 and $c_{A,1,1}(x)$ and $c_{A,1,2}(x)$ converge to $0$ at each infinity $\pm \infty$. ${\rm GDNF}(R_{f_{S^m,(c_{A,1,1},c_{A,1,2},D_{c_{A,1,1},c_{A,1,2}},X_{m,c_{A,1,1},c_{A,1,2}})}},\\
q_{f_{S^m,(c_{A,1,1},c_{A,1,2},D_{c_{A,1,1},c_{A,1,2}},X_{m,c_{A,1,1},c_{A,1,2}})}} \\(S(f_{S^m,(c_{A,1,1},c_{A,1,2},D_{c_{A,1,1},c_{A,1,2}},X_{m,c_{A,1,1},c_{A,1,2}})} {\mid}_{S^m-\{(0\cdots,1),(0\cdots, -1)\}}))\\
 \bigcup q_{f_{S^m,(c_{A,1,1},c_{A,1,2},D_{c_{A,1,1},c_{A,1,2}},X_{m,c_{A,1,1},c_{A,1,2}})}}(\{(0\cdots,1),(0\cdots, -1)\}),
\\
\bar{f_{S^m,(c_{A,1,1},c_{A,1,2},D_{c_{A,1,1},c_{A,1,2}},X_{m,c_{A,1,1},c_{A,1,2}})}})$ is a $1$-dimensional connected cell complex with exactly one vertex and exactly two edges whose closures taken in the cell complex are homeomorphic to $\{t \mid t>0\}$ and which are both oriented as edges departing from the vertex.

Let $c_{A,2,1}:=-\frac{1}{2(x^2+1)}<0$, $c_{A,2,2}:=\frac{e^{-x^2}{\sin} x}{3(x^2+1)}$ and we have $c_{A,2,1}(x)<0$ and $c_{A,2,1}(x)<c_{A,2,2}(x)$ for any $x \in \mathbb{R}$. This is a case for Theorem \ref{thm:1} (\ref{thm:1.3.1})
 and $c_{A,2,1}(x)$ and $c_{A,2,2}(x)$ converge to $0$ at each infinity $\pm \infty$.
The sets $\{x \mid c_{A,2,2}(x)>0\}$, $\{x \mid c_{A,2,2}(x)<0\}$ and $\{x \mid {c_{A,2,2}}^{\prime}(x)=0\}$ are also unbounded: note that ${c_{A,2,2}}^{\prime}(x)$ means the 1st derivative. 

 ${\rm GDNF}(R_{f_{S^m,(c_{A,2,1},c_{A,2,2},D_{c_{A,2,1},c_{A,2,2}},X_{m,c_{A,2,1},c_{A,2,2}})}},\\
q_{f_{S^m,(c_{A,2,1},c_{A,2,2},D_{c_{A,2,1},c_{A,2,2}},X_{m,c_{A,2,1},c_{A,2,2}})}} \\(S(f_{S^m,(c_{A,2,1},c_{A,2,2},D_{c_{A,2,1},c_{A,2,2}},X_{m,c_{A,2,1},c_{A,2,2}})} {\mid}_{S^m-\{(0\cdots,1),(0\cdots, -1)\}}))\\
 \bigcup q_{f_{S^m,(c_{A,2,1},c_{A,2,2},D_{c_{A,2,1},c_{A,2,2}},X_{m,c_{A,2,1},c_{A,2,2}})}}(\{(0\cdots,1),(0\cdots, -1)\}),
\\
\bar{f_{S^m,(c_{A,2,1},c_{A,2,2},D_{c_{A,2,1},c_{A,2,2}},X_{m,c_{A,2,1},c_{A,2,2}})}})$ is a $1$-dimensional connected cell complex with exactly one vertex and exactly two edges whose closures taken in the cell complex are homeomorphic to $S^1$.


This is summarized as follows.
\begin{Thm}
\label{thm:5}
For the function $f_{S^m,(c_1,c_2,D_{c_1,c_2},X_{m,c_1,c_2})}$ defined in {\rm (}the proof of{\rm )} Theorem \ref{thm:3}, we can have a case for Theorem \ref{thm:1} {\rm (}\ref{thm:1.3.1}{\rm )} such that $c_1(x)$ and $c_2(x)$ both converge to $0$, as $x$ diverges to $-\infty$ and $+\infty$ and that  ${\rm GDNF}(R_{f_{S^m,(c_{1},c_{2},D_{c_{1},c_{2}},X_{m,c_{1},c_{2}})}},\\
q_{f_{S^m,(c_{1},c_{2},D_{c_{1},c_{2}},X_{m,c_{1},c_{2}})}} \\(S(f_{S^m,(c_{1},c_{2},D_{c_{1},c_{2}},X_{m,c_{1},c_{2}})} {\mid}_{S^m-\{(0\cdots,1),(0\cdots, -1)\}}))\\
 \bigcup q_{f_{S^m,(c_{1},c_{2},D_{c_{1},c_{2}},X_{m,c_{1},c_{2}})}}(\{(0\cdots,1),(0\cdots, -1)\}),
\\
\bar{f_{S^m,(c_{1},c_{2},D_{c_{1},c_{2}},X_{m,c_{1},c_{2}})}})$ \\
is as follows, for example.
\begin{enumerate}
\item A one-point set.
\item A $1$-dimensional connected cell complex with exactly one vertex and exactly two edges whose closures taken in the cell complex are homeomorphic to $\{t \mid t>0\}$ and which are both oriented as edges departing from the vertex.
\item A $1$-dimensional connected cell complex with exactly one vertex and exactly two edges whose closures taken in the cell complex are homeomorphic to $\{t \mid t>0\}$ and which are both oriented as edges entering the vertex.
\item A $1$-dimensional connected cell complex with exactly one vertex and exactly two edges whose closures taken in the cell complex are homeomorphic to $S^1$.
\end{enumerate}
\end{Thm}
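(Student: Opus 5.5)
Theorem \ref{thm:5} is an existence statement, so the plan is to exhibit, for each of the four listed shapes, an explicit pair $(c_1,c_2)$ of real analytic functions. Each pair will be of type Theorem \ref{thm:1} (\ref{thm:1.3.1}), with $c_1(x),c_2(x)\to 0$ as $x\to\pm\infty$. For each pair we then compute the GDNF of the compactified function $f_{S^m,(c_1,c_2,D_{c_1,c_2},X_{m,c_1,c_2})}$ of Theorem \ref{thm:3}.

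Whether an element of the Reeb space of $f_{S^m,\dots}$ is NF, and where the level $0$ sits, is read off from the relative position of $c_1$, $c_2$ and $0$ near $\pm\infty$. By Theorem \ref{thm:3}, both poles $(0\cdots,\pm1)$ have value $0$. Hence everything is governed by the contours of $f_{S^m,\dots}$ at the level $0$, which pass through the poles.

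\emph{Case (1).} Take $c_1<c_2$ both strictly positive, for instance $c_1(x)=\frac{1}{2(x^2+1)}$ and $c_2(x)=\frac{1}{x^2+1}$. Then $D_{c_1,c_2}$ never meets $\{x_1=0\}$, so the level set at $0$ is just the two poles. Each pole is an isolated extremum-type point in a neighbourhood where the function is like a proper function with finitely many critical values. Since $c_1,c_2$ have finitely many critical points, $R_{f}$ is a finite graph and there is no NF point, so the GDNF is a single point.

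\emph{Case (2).} Use the pair $c_{A,1,1},c_{A,1,2}$ given just above. Because $c_{A,1,1}$ oscillates with unbounded sets of positive and negative derivative, critical values accumulate at $0$ from above near each pole. So each pole gives an NF point, and these become two edges. On every other component of $R_f$ minus NF points the function value is $>0$, so near each NF point $f$ attains its minimum there. The Reeb space minus these two points is connected, which gives a single vertex with two edges departing from it. I would check that the two poles give distinct elements of $R_f$, since the level set at $0$ is exactly the two poles, and that each edge closure is a half-line $\{t>0\}$ in the sense of Theorem \ref{thm:1}.

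\emph{Case (3).} Replace $(c_1,c_2)$ by $(-c_{A,1,2},-c_{A,1,1})$. This flips the sign of $f$, so the orientations are reversed and both edges enter the vertex.

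\emph{Case (4).} Use $c_{A,2,1},c_{A,2,2}$. Now $0$ lies strictly between $c_1$ and $c_2$ for $|x|$ large on an unbounded set, but $c_{A,2,2}$ changes sign infinitely often. So the level set at $0$ is one connected contour through both poles and a sequence of components, and critical values of both signs accumulate at $0$. The NF points are the elements of $R_f$ over level $0$ adjacent to this accumulation. Each such point has the function maximal from one side and minimal from the other, so by the equivalence relation generating GDNF vertices all components get identified into one vertex. Each NF edge is then attached at both ends to that vertex, giving closures homeomorphic to $S^1$.

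The main obstacle is Case (4). There one must pin down exactly which points of $R_f$ are NF, and verify that precisely two edges result. This means handling carefully how the pole contours and the infinitely many zero-crossing contours of $c_{A,2,2}$ sit in $R_f$, including the infinite sign changes of $c_{A,2,2}$ rather than just the derivative. I would first try to prove that the whole level set $f^{-1}(0)$ consists of one NF contour near each pole, arguing with the disjoint-neighbourhood technique of STEP 1-1 of Theorem \ref{thm:1}.
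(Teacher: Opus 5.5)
Your approach is the paper's: explicit pairs, with (2) realized by $(c_{A,1,1},c_{A,1,2})$, (3) by a sign change, and (4) by $(c_{A,2,1},c_{A,2,2})$. For (1) the paper reuses the STEP 1-3-1 pair $c_1\equiv 0$, $c_2=\frac{1}{x^2+1}$. There $f^{-1}(0)$ is a single arc joining the poles and $R_f$ is a segment. Your strictly positive pair instead makes the poles two separate minima and $R_f$ a finite Y-shaped tree, which works just as well. Cases (1)--(3) are fine. Which of (2), (3) a given pair realizes depends on an orientation convention the paper's definition leaves ambiguous, but the two are exchanged by $f\mapsto -f$, so nothing is lost.

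The paper only asserts the shape in (4), and your verification of it rests on a false picture of $f^{-1}(0)$. Write $f$ for the compactified function and $p_\pm$ for the poles reached as $x_2\to\pm\infty$. There is no contour through both poles, and $f^{-1}(0)$ is not \emph{one NF contour near each pole}. Indeed, if $p_+,p_-$ lay in one contour, then $q_f(p_+)=q_f(p_-)$, so there would be at most one NF point and hence at most one edge.

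What actually happens is as follows. Since $c_{A,2,1}<0$, the slice $X_{m,c_{A,2,1},c_{A,2,2}}\cap\{x_1=0\}$ projects onto $\{x_2:\sin x_2\ge 0\}$. Hence $f^{-1}(0)=\{p_+,p_-\}\sqcup\bigsqcup_{n\in\mathbb{Z}}K_n$, with $K_n\cong S^{m-1}$ lying over $[2n\pi,(2n+1)\pi]$.

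By Proposition~\ref{prop:1}, the critical points off the poles lie over $(c_{A,2,1}(0),0)$ and over the extrema of $c_{A,2,2}$. At those extrema $c_{A,2,2}\neq 0$, because ${c_{A,2,2}}^{\prime}(k\pi)=\frac{e^{-k^2\pi^2}\cos k\pi}{3(k^2\pi^2+1)}\neq 0$. So every $K_n$ is a regular contour.

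The $K_n$ accumulate at $p_\pm$, but $\{p_+\}\cup\bigcup_{n\ge N}K_n$ is open and closed in $f^{-1}(0)$ for every $N$. So $\{p_+\}$, and likewise $\{p_-\}$, is a contour by itself. The critical contours over the extrema of $c_{A,2,2}$ converge to $q_f(p_\pm)$, and no other point of $R_f$ is approached by vertices. Thus the NF set is exactly $\{q_f(p_+),q_f(p_-)\}$: two distinct points, hence two edges.

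Your vertex count also misuses the equivalence relation. It merges two components only when $\bar{f}$ is maximal at a common NF point on both, or minimal on both. A component lying above an NF point and one lying below it are not merged; this is exactly why Theorem~\ref{thm:1} (\ref{thm:1.3.4}) has three vertices. Here no merging is needed. Because the pole contours are singletons, $R_f$ minus the NF set equals $q_f(S^m-\{p_+,p_-\})$, which is connected, so there is one vertex.

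The loops come from the values of $f$ near each pole. In every neighbourhood of $p_\pm$, $f$ takes positive values (near $x_1=c_{A,2,2}(x_2)$ where $\sin x_2>0$) and negative values (near $x_1=c_{A,2,1}(x_2)$). So the single component reaches each NF point from above and from below, and each edge both departs from and enters the vertex.
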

\begin{proof}
We only explain the first case and the third case to complete the proof. The first case is obtained as in STEP 1-3-1 of our proof of Theorem \ref{thm:1}. The third case is obtained by changing the signs of the functions in the second case.
\end{proof}

Related further precise studies on these compactifications are left to us as a part of our future studies.
\section{Conflict of interest and Data availability.}
 The author is a researcher at Osaka Central Advanced Mathematical Institute (OCAMI researcher). This is supported by MEXT Promotion of Distinctive Joint Research Center Program JPMXP0723833165. He thanks the people for the hospitality, where he is not employed by the institute or the projects. 
 
No data other than the present file is generated, related to the present study. We do not assume non-trivial arguments in preprints which are still unpublished. We may refer to these preprints to some extent.

\end{document}